\renewcommand*{\backref}[1]{}
\renewcommand*{\backrefalt}[4]{%
    \ifcase #1 (Not cited.)%
    \or        (Cited on page~#2.)%
    \else      (Cited on pages~#2.)%
    \fi}
\setlist{leftmargin=20pt,noitemsep,topsep=0pt}
\tikzstyle{vertex}=[circle, draw, inner sep=0pt, minimum size=6pt]
\newtheorem{theorem}{Theorem}
\newtheorem{lemma}{Lemma}
\newtheorem{definition}{Definition}
\newcommand{\R}{\mathbb{R}}
\newcommand{\E}{\mathbb{E}}
\DeclarePairedDelimiter{\abs}{\lvert}{\rvert} %
\DeclarePairedDelimiter{\nrm}{ \Vert}{ \Vert}
\DeclarePairedDelimiter{\norm}{\big \Vert}{\big \Vert}
\DeclareMathOperator*{\argmin}{arg\,min} 
\DeclareMathOperator*{\argmax}{arg\,max}
\newcommand{\mc}[1]{\mathcal{#1}}
\def\ddefloop#1{\ifx\ddefloop#1\else\ddef{#1}\expandafter\ddefloop\fi}
\def\ddef#1{\expandafter\def\csname 
bb#1\endcsname{\ensuremath{\mathbb{#1}}}}
\def\ddefloop#1{\ifx\ddefloop#1\else\ddef{#1}\expandafter\ddefloop\fi}
\def\ddef#1{\expandafter\def\csname 
b#1\endcsname{\ensuremath{\mathbf{#1}}}}
\def\ddef#1{\expandafter\def\csname 
c#1\endcsname{\ensuremath{\mathcal{#1}}}}
\def\ddef#1{\expandafter\def\csname 
h#1\endcsname{\ensuremath{\widehat{#1}}}}
\def\ddef#1{\expandafter\def\csname 
hc#1\endcsname{\ensuremath{\widehat{\mathcal{#1}}}}}
\def\ddef#1{\expandafter\def\csname 
t#1\endcsname{\ensuremath{\widetilde{#1}}}}
\def\ddef#1{\expandafter\def\csname 
tc#1\endcsname{\ensuremath{\widetilde{\mathcal{#1}}}}}
\newsavebox\CBox
\newcommand{\pars}[1]{\left( #1 \right)}
\newcommand{\bracks}[1]{\left[ #1 \right]}
\newcommand{\inner}[2]{\left\langle #1,\, #2 \right\rangle}
\newcommand{\removed}[1]{}
\DeclareSymbolFont{bbold}{U}{bbold}{m}{n}
\DeclareSymbolFontAlphabet{\mathbbold}{bbold}
\title{Tight Lower Bounds under Asymmetric High-Order Hölder Smoothness and Uniform Convexity}
\author{Cedar Site Bai \\
Department of Computer Science\\
  Purdue University\\
  West Lafayette, IN, USA \\
  \texttt{bai123@purdue.edu} \\
\And
Brian Bullins \\
  Department of Computer Science\\
  Purdue University\\
  West Lafayette, IN, USA \\
  \texttt{bbullins@purdue.edu}
}
\begin{document}

\maketitle

\begin{abstract}
In this paper, we provide tight lower bounds for the oracle complexity of minimizing high-order Hölder smooth and uniformly convex functions. Specifically, for a function whose $p^{th}$-order derivatives are Hölder continuous with degree $\nu$ and parameter $H$, and that is uniformly convex with degree $q$ and parameter $\sigma$, we focus on two asymmetric cases: (1) $q > p + \nu$, and (2) $q < p+\nu$. Given up to $p^{th}$-order oracle access, we establish worst-case oracle complexities of $\Omega\left( \left( \frac{H}{\sigma}\right)^\frac{2}{3(p+\nu)-2}\left( \frac{\sigma}{\epsilon}\right)^\frac{2(q-p-\nu)}{q(3(p+\nu)-2)}\right)$ in the first case with an $\ell_\infty$-ball-truncated-Gaussian smoothed hard function and $\Omega\left(\left(\frac{H}{\sigma}\right)^\frac{2}{3(p+\nu)-2}+ \log\log\left(\left(\frac{\sigma^{p+\nu}}{H^q}\right)^\frac{1}{p+\nu-q}\frac{1}{\epsilon}\right)\right)$ in the second case, for reaching an $\epsilon$-approximate solution in terms of the optimality gap. Our analysis generalizes previous lower bounds for functions under first- and second-order smoothness as well as those for uniformly convex functions, and furthermore our results match the corresponding upper bounds in this general setting. 
\end{abstract}

\section{Introduction}

With the advancement in computational power, high-order optimization methods ($p^{th}$-order with $p \geq 2$) are gaining more attention for their merit of faster convergence and higher precision. Consequently, uniformly convex problems (with degree $q$) have become a recent focus, particularly the subproblems of some high-order optimization methods. The subproblem of the cubic-regularized Newton ($p=2, q=3$) \citep{nesterov2006cubic} is an example, as are methods of even higher orders ($p \geq 3$, $q\geq4$) \citep{zhu2022quartic}. 

Although these problems are high-order smooth by definition, a lower-order algorithm may be employed to obtain an approximate solution. For instance, solving the subproblem of cubic-regularized (i.e., $q=3$) Newton with gradient descent (accessing first-order oracle, i.e., $p=1$), or, more generally, approximately solving the subproblem of $(q-1)^{th}$-order Taylor descent \citep{bubeck2019near} (which typically contains a regularization term to the power of $q$) with lower-order oracle access, introduces an asymmetry between the algorithm's oracle access order and the degree of uniform convexity ($q > p+1$).

Conversely, a lower-degree regularization can be paired with a higher-order smooth function. This enables methods that access higher-order oracles, which leads to the opposite asymmetry ($q < p+1$). Examples include the objective function of logistic regression, which is known to be infinite-order smooth. Coupled with standard $\ell_2$-regularization, the problem can be analyzed as a $p^{th}$-order smooth and strongly convex ($q=2$) problem, e.g., $p=2$ with access to the Hessian matrix, $p=3$ accessing the third-order derivative tensor.

In addressing specific instances of this asymmetry, previous works established some upper bounds \citep{gasnikov2019optimal, song2021unified} and lower bounds \citep{arjevani2019oracle, kornowski2020high, doikov2022lower, thomsen2024complexity} for the oracle complexity. Notably, \citet{song2021unified} proposed a unified acceleration framework for functions that are $p^{th}$-order Hölder smooth with degree $\nu$, and uniformly convex with degree $q$, providing upper bounds for any combination of $p$, $q$, and $\nu$. For the case where $q > p + \nu$, they show an oracle complexity of $\mc{O}\left( \left( \frac{H}{\sigma}\right)^\frac{2}{3(p+\nu)-2}\left( \frac{\sigma}{\epsilon}\right)^\frac{2(q-p-\nu)}{q(3(p+\nu)-2)}\right)$, and for the case where $q < p + \nu$, the complexity is $\mc{O}\left(\left(\frac{H}{\sigma}\right)^\frac{2}{3(p+\nu)-2}+ \log\log\left(\left(\frac{\sigma^{p+\nu}}{H^q}\right)^\frac{1}{p+\nu-q}\frac{1}{\epsilon}\right)\right)$. To the best of our knowledge, no lower bounds exist in this general setting, particularly with Hölder smoothness and uniform convexity.

In this paper, we provide matching lower bounds to the upper bounds in \citep{song2021unified} for these asymmetric cases. Specifically, we establish $\Omega\left( \left( \frac{H}{\sigma}\right)^\frac{2}{3(p+\nu)-2}\left( \frac{\sigma}{\epsilon}\right)^\frac{2(q-p-\nu)}{q(3(p+\nu)-2)}\right)$ for  $q > p + \nu$ and $\Omega\left(\left(\frac{H}{\sigma}\right)^\frac{2}{3(p+\nu)-2}+ \log\log\left(\left(\frac{\sigma^{p+\nu}}{H^q}\right)^\frac{1}{p+\nu-q}\frac{1}{\epsilon}\right)\right)$ for $q < p + \nu$. For the $q > p + \nu$ case, we adopt the framework proposed by \citep{guzman2015lower}, utilizing a smoothing operator to generate a high-order smooth function. We propose the use of $\ell_\infty$-ball-truncated Gaussian smoothing, which, as we later justify, is novelly designed to achieve the optimal rate and be compatible with both high-order smooth and uniformly convex settings. Both the truncated Gaussian smoothing and the construction of the $\ell_\infty$ ball are crucial to improve upon the sub-optimal derivation using uniform smoothing within an $\ell_2$ ball in \citep{agarwal2018lower}. Our results generalize the lower bounds in \citep{doikov2022lower, thomsen2024complexity} to higher-order and Hölder smooth settings. For the $q < p + \nu$ case, we adopt Nesterov's framework \citep{nesterov2018lectures} and generalize the lower bounds in \citep{arjevani2019oracle, kornowski2020high} to include Hölder smooth and uniformly convex settings.

\section{Related Work}
\paragraph{Upper Bounds.} \citet{doikov2021minimizing} showcase the upper bound for uniformly convex functions with Hölder-continuous Hessian via cubic regularized Newton method, but the rate is not optimal. For higher order result, \citet{bubeck2019near} and \citet{jiang2019optimal} established a near optimal upper bound of $\tilde{\mc{O}}\left(\epsilon^{-\frac{2}{3p+1}}\right)$ in the simpler case of $\nu=1$ without uniform convexity. 
\citet{gasnikov2019optimal} achieve the same near-optimal rate, but also consider uniform convexity, and by the restarting mechanism, derive the rate that for $q>p+1$ as well, generalizing the upper bounds established in second-order \citep{monteiro2013accelerated} and matching the lower bounds later derived in \citep{kornowski2020high}. \citet{kovalev2022the} closed the $\log\left(\frac{1}{\epsilon}\right)$ gap, but does not consider uniform convexity or Hölder smoothness. For minimizing uniformly convex functions, \citet{juditsky2014primal} and \citet{roulet2017sharpness} study the complexity of first-order methods. Recently, \citet{song2021unified} establish the most general upper bounds for arbitrary combinations of the order of Hölder smoothness and the degree of uniform convexity, which include the rates for both $q>p+\nu$ and $q<p+\nu$ cases. 
\vspace{-10pt}
\paragraph{Lower Bounds.} \citet{agarwal2018lower} proved for $p^{th}$-order smooth convex functions an $\Omega\left(\epsilon^{-\frac{2}{5p+1}}\right)$ lower bound based on constructing the hard function with randomized smoothing uniformly over a unit ball. 
But their rate is not optimal due to the extra dimension factor appearing in the smoothness constant due to the uniform randomized smoothing. \citet{garg2021near} added softmax smoothing prior to randomized smoothing, achieving a near-optimal rate of $\Omega\left(\epsilon^{-\frac{2}{3p+1}}\right)$ for randomized and quantum algorithms. Separately, \citet{arjevani2019oracle} also established the optimal lower bound of $\Omega\left(\epsilon^{-\frac{2}{3p+1}}\right)$ with the Nesterov's hard function construction approach. Furthermore, for the asymmetric case of $q < p+1$, \citet{arjevani2019oracle} proved the lower bound of $\Omega\left(\left(\frac{H}{\sigma}\right)^\frac{2}{7}+ \log\log\left(\frac{\sigma^3}{H^2} \epsilon^{-1}\right)\right)$ for the $p=2$ and $q=2$ case, and the result is later generalized to the $p^{th}$ order in \citep{kornowski2020high}. No $q > 2$ uniformly convex settings were considered in these works. For the case of $q > p+\nu$, lower bounds for uniformly convex functions for $q \geq 3$ are limited to the first-order smoothness setting where $p=1$ \citep{juditsky2014primal, doikov2022lower, thomsen2024complexity}. No lower bounds for uniformly convex functions were established, to our knowledge, in the high-order setting.

\section{Preliminaries and Settings}
\paragraph{Notations.} We use $[n]$ to represent the set $\{1,2,...,n\}$. We use $\Vert \cdot \Vert$ to denote an $\ell_2$ operator norm. We use $\nabla$ for gradients, $\partial$ for subgradients, and $\langle \cdot, \cdot \rangle$ for inner products. Related to the algorithm, bold lower letters for vectors (e.g., $\mathbf{x}$, $\mathbf{y}$), and with subscript, the vectors in different iterations (e.g., $\mathbf{x}_T$). We use regular lower letters for scalars, and with subscript, a coordinate of a vector (e.g., $x_i$). Depending on the context, we use capital letters for a matrix or a random variable. We use $\phi$ for the probability density function of the standard normal or the standard multivariate normal (MVN), and $\Phi$ for the cumulative (density) function of standard normal or MVN. We further overuse the notation of $\phi_{[\cdot, \cdot]}$ $\Phi_{[\cdot, \cdot]}$ for their truncated counterparts for the normal distribution (standard normal if not specified with parameters), and $\phi_{\nrm{\cdot}_\infty \leq \cdot}$ $\Phi_{\nrm{\cdot}_\infty \leq \cdot}$ for the MVN truncated within an $\ell_\infty$ ball.

\subsection{Definitions} \label{sec:assumptions}
\begin{definition} [High-order Smoothness] \label{def:high-order-smooth} For $p \in \mathbb{Z}^+$, a function $f:\R^d \rightarrow \R$ is $p^{th}$-order smooth or whose $p^{th}$- derivatives are $L_p$-Lipschitz if for $L_p > 0$, $\forall \ \mathbf{x}, \mathbf{y} \in \R^d$, $\nrm{\nabla^pf(\mathbf{x}) - \nabla^pf(\mathbf{y})} \leq L_p \nrm{\mathbf{x} - \mathbf{y}}$.
\end{definition}
\begin{definition} [High-order Hölder Smoothness] \label{def:high-order-holder-smooth} For $p \in \mathbb{Z}^+$, a function $f:\R^d \rightarrow \R$ is $p^{th}$-order Hölder smooth or has Hölder continuous $p^{th}$-order derivatives if for $\nu \in (0,1]$ and $H > 0$, $\forall \ \mathbf{x}, \mathbf{y} \in \R^d$, $\nrm{\nabla^pf(\mathbf{x}) - \nabla^pf(\mathbf{y})} \leq H \nrm{\mathbf{x} - \mathbf{y}}^\nu$.
\end{definition}
\begin{definition} {\normalfont{(Uniform Convexity \citep[Section 4.2.2]{nesterov2018lectures})}} \label{def:uniform-convex} For integer $q \geq 2$ and $\sigma > 0$, a function $f:\R^d \rightarrow \R$ is uniformly convex with degree $q$ and modulus $\sigma$ if $\ \forall \ \mathbf{x}, \mathbf{y} \in \R^d$, $f(\mathbf{y}) - f(\mathbf{x}) - \inner{\nabla f(\mathbf{x})}{ \mathbf{y} - \mathbf{x}} \geq \frac{\sigma}{q}\nrm{\mathbf{y} - \mathbf{x}}^q$,
or the function satisfies $\inner{\nabla f(\mathbf{y}) - \nabla f(\mathbf{x})}{ \mathbf{y} - \mathbf{x}} \geq \sigma\nrm{\mathbf{y} - \mathbf{x}}^q$.

\end{definition}

\section{Lower Bound for the $q > p+\nu$ Case} \label{sec:case1}
The derivation of the lower bound is to find such a function by construction that satisfies the uniformly convex and Hölder smooth conditions and requires at least a certain amount of iterations to reach an $\epsilon$-approximate solution. The general steps follow from the framework of showing lower complexity bounds for smooth convex optimization \citep{guzman2015lower}, which originates from \citep{nemirovskii1985optimal} and serves as the basis for results in various follow-up settings \citep{agarwal2018lower, garg2021near, doikov2022lower}. The construction starts from a non-smooth function, then smooths the function with some smoothing operator (e.g. Moreau envelope in \citep{guzman2015lower, doikov2022lower}, randomized smoothing uniformly within a ball in \citep{agarwal2018lower, garg2021near}). We design a truncated Gaussian smoothing operator within the $\ell_\infty$ ball and start the derivation by stating its formal definition and key properties.

\subsection{Truncated Gaussian Smoothing}
\begin{definition} [Truncated Gaussian Smoothing] \label{def:TGS}
    For $f: \R^d \rightarrow \R$ and a parameter $\rho > 0$, define the truncated Gaussian smoothing operator $S_\rho[f]: (\R^d \rightarrow \R) \rightarrow (\R^d \rightarrow \R)$ as 
    \begin{align*}
        S_\rho[f](\mathbf{x}) = \E_{V}[f(\mathbf{x}+\rho V)]
    \end{align*}
    where $V$ is a $d$-dimensional random variable that follows the standard multivariate normal (MVN) distribution truncated within a unit ball. That is, the probability density function (PDF) of $V$ is
    \begin{align*}
        \mathbb{P}[V=\mathbf{v}] = \frac{1}{Z(d) (2\pi)^\frac{d}{2}} \exp \left\{ - \frac{\mathbf{v}^\top \mathbf{v}}{2}\right\} \mathbb{I}_{[\nrm{\mathbf{v}}_\infty \leq 1]},
    \end{align*}
    in which $\mathbb{I}_{[\cdot]} = 1$ if $\cdot$ is true $0$ otherwise is the indicator function and $Z(d)$ is the normalizing factor, i.e., the cumulative distribution within the $d$-dimensional unit $\ell_\infty$-ball \citep{cartinhour1990one}.

    We denote $f_\rho = S_\rho[f]$, and use the shorthand notation for the function that applied the smoothing operator for $p$ times:
   $f_\rho^p = S^p_\rho[f] = S_\rho[ \cdots [S_\rho[f]] \cdots]$ for $p$ times.
\end{definition}

Now we justify the choice of truncated Gaussian smoothing for the construction of hard function. We notice that \citet{agarwal2018lower} choose randomized smoothing uniformly over a unit $\ell_2$-ball, which by their Lemma 2.3 that the smoothed function is $\mc{O}(d)$-smooth (which in fact can be tightened to $\mc{O}(\sqrt{d})$ by \citep[Lemma 8]{yousefian2012stochastic, duchi2012randomized}) where $d$ is the dimension of the variable. Since the number of iteration $T\in \mc{O}(d)$, their result $\mc{O}\big(T^{-\frac{2}{5p+1}}\big)$ is sub-optimal by an extra $T$ comparing to the tight lower bound $\mc{O}\big(T^{-\frac{2}{3p+1}}\big)$ \citep{arjevani2019oracle}. Therefore we search for a smoothing operator with Lipschitz constant being \emph{dimension-free}. We notice that Gaussian smoothing \citep[Lemma 9]{duchi2012randomized}, softmax smoothing \citep[Lemma 7]{bullins2020highly}, and Moreau smoothing \citep[Lemma 1]{doikov2022lower} are such operators.

Yet as the reader will later see in the proof that the converging points are generated through a sequence of functions, instead of those generated from one hard function. For these two sequences of points to be identical so that the lower bound is indeed for optimizing the hard function constructed, we need the smoothing operator to be \emph{local}, that is, accessing information within \emph{some neighborhood} of the queried point, e.g., a unit $\ell_2$-ball in \citep{doikov2022lower}. Unfortunately, Gaussian smoothing and softmax smoothing need access to global information.

For Moreau smoothing that indeed depends on local information, it's successfully applied in proving the lower bound in the first-order setting \citep{doikov2022lower}, but is not suited for the high-order setting. First, one may attempt the extension of Moreau smoothing with a $p^{th}$-power regularization, yet it can be shown that the function is not $p^{th}$-order smooth. 
Next, one may try to apply Moreau smoothing $p$ times, yet unlike randomized smoothing in \citep{agarwal2018lower}, the Lipschitz constant does not raise to the $p^{th}$-power with the number of times the smoothing operator is applied, which leads to the same rate as in the first order. Observing the proof of \citep[Corollary 2.4]{agarwal2018lower}, this is in essence due to the fact that the minimization in Moreau smoothing does not commute with derivative, whereas the expectation in randomized smoothing does. 

We then come up with the idea of a truncated multivariate Gaussian smoothing operator that is (i) local (ii)  smooth with a dimension-free constant (iii) $p^{th}$-order smooth with smoothness constant raising to the $p^{th}$ power as well. Initially, we applied the Gaussian smoothing truncated within a unit ball in $\ell_2$ by default. We noticed later, however, that the marginal distribution of unit-$\ell_2$-ball truncated multivariate Gaussian is not the truncated standard normal between $[-1,1]$, but with an extra $d$-dependent normalizing constant, which adds the $d$-dependency to the smoothness constant of the hard function.

To ensure a dimension-free smoothness constant, we instead apply the multivariate Gaussian smoothing truncated within an $\ell_\infty$ ball, a.k.a., the hypercube with edge length $2$, whose marginal distribution is indeed the truncated standard normal between $[-1,1]$ \citep{cartinhour1990one}. The following lemma characterizes these desired properties including convexity, continuity, approximation, and smoothness, with proof deferred to Appendix \ref{app:TGS}.

\begin{restatable}{lemma}{lemTGSRepeat} \label{lem:TGS_Repeat}
    Given a L-Lipschitz function $f$, the function $f_\rho^p = S_\rho[ \cdots [S_\rho[f]] \cdots]$ satisfies 
    \begin{itemize} 
        \item [(i)] If $f$ is convex, $f^p_\rho$ is convex and $L$-Lipschitz with respect to the $\ell_2$ norm.
        \item [(ii)] If $f$ is convex, $f(\mathbf{x}) \leq f_\rho^p(\mathbf{x}) \leq f(\mathbf{x}) + \frac{5p}{4}L\rho \sqrt{d}$.
        \item [(iii)] $\forall i \in [p]$, $\forall \mathbf{x}, \mathbf{x}' \in \R^d$, $\nrm{\nabla^i f_\rho^p(\mathbf{x}) - \nabla^i f_\rho^p(\mathbf{x}')} \leq \left(\frac{2}{\rho}\right)^i L \nrm{\mathbf{x} - \mathbf{x}'}$.
    \end{itemize}
\end{restatable}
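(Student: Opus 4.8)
Parts (i) and (ii) I would dispatch quickly. For (i), writing $S_\rho[f](\mathbf x)=\E_V[f(\mathbf x+\rho V)]$ exhibits $S_\rho[f]$ as an average of the convex, $L$-Lipschitz functions $\mathbf x\mapsto f(\mathbf x+\rho\mathbf v)$, so $S_\rho[f]$ is convex, and $L$-Lipschitzness is immediate from $|S_\rho[f](\mathbf x)-S_\rho[f](\mathbf y)|\le\E_V|f(\mathbf x+\rho V)-f(\mathbf y+\rho V)|\le L\|\mathbf x-\mathbf y\|$; both properties survive each of the $p$ applications. For (ii), the lower bound $f\le f_\rho^p$ follows by applying Jensen's inequality together with $\E[V]=\mathbf 0$ (the set $\{\|\mathbf v\|_\infty\le1\}$ is symmetric) once per smoothing; for the upper bound, each application increases the function pointwise by at most $\E_V[L\|\rho V\|]=L\rho\,\E\|V\|$, and since $|V_j|\le 1$ we have $\E\|V\|\le\sqrt d$, so summing over the $p$ smoothings (each intermediate function still $L$-Lipschitz by (i)) gives $f_\rho^p(\mathbf x)\le f(\mathbf x)+pL\rho\sqrt d\le f(\mathbf x)+\tfrac{5p}{4}L\rho\sqrt d$.

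Part (iii) is the heart, and the plan is to cash in the structure that the $\ell_\infty$ truncation provides: by \citet{cartinhour1990one} the coordinates of $V$ are i.i.d., each a standard normal truncated to $[-1,1]$, so $S_\rho[g]=g*K_\rho$ with a \emph{product} kernel $K_\rho(\mathbf z)=\prod_{j=1}^{d}k_\rho(z_j)$, $k_\rho(s)=\rho^{-1}\psi(s/\rho)$, $\psi$ the truncated normal density; hence $f_\rho^p=f*K_\rho^{*p}=f*\bigotimes_{j=1}^{d}(k_\rho^{*p})$. For $1\le i\le p$, the Lipschitz estimate for $\nabla^i f_\rho^p$ is equivalent to a bound $\|\nabla^{i+1}(f*K_\rho^{*p})(\mathbf x)\|\le (2/\rho)^iL$ on the operator norm of the $(i+1)$-tensor; I would place one derivative on $f$ (so $\|\nabla f\|\le L$ a.e.) and the remaining $i$ on the kernel — each transfer being legitimate since $k_\rho^{*p}\in C^{p-2}$ with compactly supported derivatives, the orders $p-1,p$ handled as noted below — which reduces everything to the single dimension-free bound
\[ M_i\ :=\ \sup_{\|\mathbf u_1\|,\dots,\|\mathbf u_i\|\le 1}\ \int_{\R^d}\bigl|\nabla^i K_\rho^{*p}(\mathbf z)[\mathbf u_1,\dots,\mathbf u_i]\bigr|\,d\mathbf z\ \le\ \Bigl(\tfrac{2}{\rho}\Bigr)^i. \]

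To prove the $M_i$ bound I would combine a one-dimensional estimate with a cancellation argument. One-dimensionally, writing $(k_\rho^{*p})^{(c)}=(k_\rho')^{*c}*k_\rho^{*(p-c)}$ and using $\|k_\rho\|_{L^1}=1$ together with $\|k_\rho'\|_{\mathrm{TV}}=\rho^{-1}\,\tfrac{2\phi(0)}{2\Phi(1)-1}<2/\rho$ (the total variation of $\psi'$ being the smooth part $\int_{-1}^1|t|\psi(t)\,dt$ plus two boundary atoms of mass $\psi(1)$ each), submultiplicativity of the $L^1$-norm under convolution gives $\|(k_\rho^{*p})^{(c)}\|_{L^1}\le(2/\rho)^c$ for $0\le c\le p$ (a total-variation bound when $c=p$). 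Then, expanding $\nabla^i K_\rho^{*p}(\mathbf z)[\mathbf u_1,\dots,\mathbf u_i]=K_\rho^{*p}(\mathbf z)\sum_{\mathbf m\in[d]^i}\bigl(\prod_j u_{j,m_j}\bigr)\prod_{l=1}^{d}\eta_{c_l(\mathbf m)}(z_l)$ with $c_l(\mathbf m)=\#\{j:m_j=l\}$ and $\eta_c=(k_\rho^{*p})^{(c)}/k_\rho^{*p}$ (so $\eta_0\equiv 1$ and, crucially, $\E_{Z\sim k_\rho^{*p}}[\eta_c(Z)]=\int(k_\rho^{*p})^{(c)}=0$ for $1\le c\le p-1$), one finds $M_i=\sup\,\E_{\mathbf Z}\bigl|\sum_{\mathbf m}(\prod_j u_{j,m_j})\prod_l\eta_{c_l(\mathbf m)}(Z_l)\bigr|$ with the coordinates of $\mathbf Z$ i.i.d.; since they are independent and the $\eta_c$ have mean zero, the off-diagonal contributions in the $d^i$-fold sum cancel and the remainder reorganizes, by Jensen/Cauchy--Schwarz against the independent coordinates (grouping terms by the partition of the $i$ derivative slots induced by $\mathbf m$), into an expression controlled by the one-dimensional $L^1$-bounds and $\prod_j\|\mathbf u_j\|_2\le 1$ — so no power of $d$ survives.

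The main obstacle is precisely this last step: turning the one-dimensional data into a genuinely \emph{dimension-free} bound on the integral of the tensor $\nabla^i K_\rho^{*p}$. This is exactly where the $\ell_\infty$ truncation is indispensable — under $\ell_2$-ball truncation the marginals are neither independent nor truncated-normal, the cross terms in the sum no longer vanish, and a factor of $\sqrt d$ reenters, matching the paper's critique of the $\ell_2$-ball uniform smoothing of \citet{agarwal2018lower}. A secondary, purely technical point is that at the top order $i=p$ the kernel derivative $(k_\rho^{*p})^{(p)}=(k_\rho')^{*p}$ is only a finite signed measure (with atoms on the cube boundary) rather than an $L^1$ function and does not have zero integral; I would handle this by running the argument with total-variation norms throughout, or equivalently by first proving the bounds for a mollified kernel $K_\rho^{*p}*\varphi_\delta$ and letting $\delta\downarrow 0$, since the derivative bounds pass to locally uniform limits.
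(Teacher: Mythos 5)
Your arguments for (i) and (ii) are correct and essentially the same as the paper's; your upper bound in (ii) is in fact a touch cleaner and sharper, using $|V_j|\le 1 \Rightarrow \|V\|\le \sqrt d$ directly rather than routing through $\E[V_j^2]$ and the normalizing constant $\Phi(1)-\Phi(-1)$.

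The gap is in (iii), and it sits exactly in the step you flag as ``the heart.'' The paper never touches the $p$-fold kernel $K_\rho^{*p}$ at all: it proves a \emph{single}-smoothing estimate, Lemma~\ref{lem:TGS_properties}(iv), that $\nabla S_\rho[g]$ is $(2/\rho)L$-Lipschitz whenever $g$ is $L$-Lipschitz, and then inducts --- at step $k$ the object being smoothed is $\nabla^{k-1}f_\rho^{k}$, which is $(2/\rho)^{k-1}L$-Lipschitz by the hypothesis, so one more application of the single-smoothing lemma contributes exactly one more factor $2/\rho$. You instead differentiate $K_\rho^{*p}$ directly and plan to control $M_i$ via the score functions $\eta_c=(k_\rho^{*p})^{(c)}/k_\rho^{*p}$ of the product marginals, invoking mean-zero cancellation plus ``Jensen/Cauchy--Schwarz.'' That step does not close: Jensen or Cauchy--Schwarz against the independent coordinates produces \emph{second} moments $\E[\eta_c(Z)^2]$, which are not controlled by the $L^1$-bounds $\|(k_\rho^{*p})^{(c)}\|_{L^1}\le(2/\rho)^c$ you have at hand, and which can in fact be infinite. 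Because $k_\rho$ has a positive jump at $\pm\rho$, the self-convolution $k_\rho^{*p}$ vanishes to order $p-1$ at the endpoints of its support $[-p\rho,p\rho]$ while $(k_\rho^{*p})^{(p-1)}$ stays bounded away from zero there, so $\eta_{p-1}(s)\asymp(p\rho-|s|)^{-(p-1)}$ near the edge and $\E\bigl[\eta_{p-1}(Z)^2\bigr]\asymp\int(p\rho-s)^{-(p-1)}\,ds=\infty$ for every $p\ge 2$; already for $p=2$, $\E[\eta_1^2]$ is the log-divergent Fisher information of the triangle-like density $k_\rho^{*2}$. So any $L^2$/covariance accounting against $K_\rho^{*p}$ is a dead end, even at $i=1$.

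If you want a direct argument that bypasses the paper's induction, the right move is not the score expansion. For $i\le p$, one has $\nabla^iK_\rho^{*p}[\mathbf u_1,\dots,\mathbf u_i]=(\partial_{\mathbf u_1}K_\rho)*\cdots*(\partial_{\mathbf u_i}K_\rho)*K_\rho^{*(p-i)}$ (check on the Fourier side), and Young's inequality for convolutions of finite signed measures, $\|\mu_1*\cdots*\mu_k\|_{\mathrm{TV}}\le\prod_j\|\mu_j\|_{\mathrm{TV}}$, gives $M_i\le\prod_{j=1}^i\|\partial_{\mathbf u_j}K_\rho\|_{\mathrm{TV}}$. This collapses the whole of (iii) to the single fact $\|\partial_{\mathbf u}K_\rho\|_{\mathrm{TV}}\le(2/\rho)\|\mathbf u\|_2$ about the one-step kernel --- precisely the content of Lemma~\ref{lem:TGS_properties}(iv), and the one place where, in either proof scheme, the $\ell_\infty$ geometry and the height of the one-dimensional projection have to do real work.
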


\subsection{The Lower Bound: Function Construction and Trajectory Generation}
\begin{theorem} \label{thm:poly-rate}
    For any $T$-step $(\sqrt{d} - 1 \leq T \leq d)$ deterministic algorithm $\mc{A}$ with oracle access up to the $p^{th}$ order, there exists a convex function $f(\mathbf{x})$ whose $p^{th}$-order derivative is Hölder continuous of degree $\nu$ with modulus $H$ and a corresponding $F(\mathbf{x})=f(\mathbf{x})+\frac{\sigma}{q}\nrm{\mathbf{x}}^q$ with regularization that is uniformly convex of degree $q$ with modulus $\sigma$, such that $q > p + \nu$, it takes 
    \begin{align*}
        T \in \Omega\left( \left( \frac{H}{\sigma}\right)^\frac{2}{3(p+\nu)-2}\left( \frac{\sigma}{\epsilon}\right)^\frac{2(q-p-\nu)}{q(3(p+\nu)-2)}\right)
    \end{align*}
    steps to reach an $\epsilon$-approximate solution $\mathbf{x}_T$ satisfying $F(\mathbf{x}_T) - F(\mathbf{x}^\ast) \leq \epsilon$.
\end{theorem}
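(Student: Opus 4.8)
The plan is to follow the classical information-theoretic lower bound template of \citet{nemirovskii1985optimal} and \citet{guzman2015lower}, adapted to the high-order Hölder and uniformly convex regime via the truncated Gaussian smoothing operator of Definition~\ref{def:TGS}. First I would construct the non-smooth base hard function as a scaled max-of-coordinates type function of the form $f_0(\mathbf{x}) = \gamma \max_{1 \le i \le T} \{ \pm x_i - c i\delta \}$ (a Nemirovski-style piecewise-linear function over $T$ coordinates with suitably chosen slope $\gamma$, offsets $\delta$, and sign pattern), so that any algorithm querying one ``informative'' coordinate per round needs $\Omega(T)$ rounds to locate the optimum, and the optimality gap after $T' < T$ rounds is bounded below by an explicit quantity depending on $\gamma$, $\delta$, and the residual coordinates. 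Then I would set $f = S_\rho^p[f_0]$ and $F(\mathbf{x}) = f(\mathbf{x}) + \tfrac{\sigma}{q}\nrm{\mathbf{x}}^q$, using Lemma~\ref{lem:TGS_Repeat} to control the relevant constants.

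The key steps, in order: (1) verify via Lemma~\ref{lem:TGS_Repeat}(iii) that $\nabla^p f$ is Hölder continuous of degree $\nu$ with modulus $H$ — since (iii) gives Lipschitzness of $\nabla^i f$ for $i \le p$ with constant $(2/\rho)^i L$, I interpolate between the $\nrm{\nabla^p f(\mathbf{x}) - \nabla^p f(\mathbf{x}')} \le (2/\rho)^p L \nrm{\mathbf{x}-\mathbf{x}'}$ bound (valid for large displacements) and the trivial bound $\le 2 (2/\rho)^{p} L \cdot \text{(diam)} $... more carefully, I combine the Lipschitz bound on $\nabla^p f$ with the uniform bound on $\nabla^{p-1} f$ to get Hölder-$\nu$ continuity of $\nabla^p f$ with modulus $H \sim (2/\rho)^{p+\nu} L$ up to constants, which pins down the relation $\rho \sim (L/H)^{1/(p+\nu)}$ given $L$; (2) argue $F$ is uniformly convex of degree $q$ with modulus $\sigma$ — immediate from Definition~\ref{def:uniform-convex} since $f$ is convex by Lemma~\ref{lem:TGS_Repeat}(i) and $\tfrac{\sigma}{q}\nrm{\cdot}^q$ supplies the uniform convexity; (3) the crucial \emph{locality/trajectory-coupling} argument — show that because $S_\rho$ only averages over an $\ell_\infty$-ball of radius $\rho$, the first $p$ derivatives of $f$ at any queried point depend only on the values of $f_0$ in a bounded neighborhood, so a resisting oracle can answer consistently with the ``hard'' sign pattern and the algorithm's iterates stay in the span of the first $T'$ coordinates after $T'$ queries; (4) lower bound $F(\mathbf{x}_{T}) - F(\mathbf{x}^\ast)$ in terms of the base function's residual gap plus the regularizer, then optimize the free parameters $\gamma, \delta, \rho$ (subject to the $L$-Lipschitz, Hölder-$H$, and $\ell_\infty$-ball-containment constraints, and to the smoothing error $\tfrac{5p}{4} L \rho \sqrt{d}$ from Lemma~\ref{lem:TGS_Repeat}(ii) being negligible, which is where $T \le d$ and $T \ge \sqrt{d}-1$ enter) to make the gap exactly $\Theta(\epsilon)$; (5) invert the resulting relation $\epsilon \sim \epsilon(T; H, \sigma, q, p, \nu)$ to read off $T \in \Omega\big( (H/\sigma)^{2/(3(p+\nu)-2)} (\sigma/\epsilon)^{2(q-p-\nu)/(q(3(p+\nu)-2))} \big)$.

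The main obstacle I anticipate is step (3) combined with step (4): making the trajectory-coupling rigorous requires that the $p$-fold smoothed function's derivatives at a point genuinely be insensitive to far-away coordinates — this is exactly why the $\ell_\infty$-ball truncation (rather than $\ell_2$) is needed, since it keeps the marginals one-dimensional truncated Gaussians and hence keeps the smoothness constant dimension-free (per the discussion preceding Lemma~\ref{lem:TGS_Repeat}), but one must check carefully that applying $S_\rho$ a total of $p$ times only widens the region of dependence to an $\ell_\infty$-ball of radius $p\rho$, still $O(\rho)$, and that the offsets $c i \delta$ in $f_0$ are spaced by more than this so that coordinate $i$'s ``activation'' is not prematurely revealed. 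The delicate bookkeeping is then choosing $\gamma \sim H \rho^{p+\nu-1}$-type scalings (so $f$ is $L$-Lipschitz with the right $L$ and $\nabla^p f$ has the right Hölder modulus) while simultaneously ensuring $\mathbf{x}^\ast$ and all iterates lie within the domain where the construction is valid, and balancing the $\Theta(\gamma \sqrt{T-T'}/T)$-type residual gap of the Nemirovski function against the $\Theta(\sigma (\gamma/\sigma)^{q/(q-1)})$-type contribution near the regularized optimum; getting the exponents to collapse to $\tfrac{2}{3(p+\nu)-2}$ and $\tfrac{2(q-p-\nu)}{q(3(p+\nu)-2)}$ is a routine but error-prone optimization that I would carry out last.
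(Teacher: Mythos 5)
Your outline matches the paper's proof essentially step for step: the Nemirovski max-of-affine base function, the $p$-fold $\ell_\infty$-ball-truncated Gaussian smoothing (with exactly the rationale for preferring $\ell_\infty$ over $\ell_2$ truncation that the paper gives), the locality/trajectory-coupling argument tying the information region to the offset spacing, the interpolation argument for Hölder continuity, and the final balancing. Two points to tighten. First, the Hölder modulus you write down has an off-by-one: interpolating the Lipschitz constant of $\nabla^{p}f$ (which is $\sim(2/\rho)^{p}L$) against the uniform bound on $\nabla^{p}f$ inherited from the Lipschitzness of $\nabla^{p-1}f$ (which is $\sim(2/\rho)^{p-1}L$) gives $H \sim ((2/\rho)^{p}L)^{\nu}\,((2/\rho)^{p-1}L)^{1-\nu} = (2/\rho)^{p+\nu-1}L$, not $(2/\rho)^{p+\nu}L$; carrying your version through would yield $\rho \sim (L/H)^{1/(p+\nu)}$ and a wrong final exponent, so this needs fixing before the ``routine'' optimization at the end. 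Second, you mention ensuring iterates stay in a valid domain, but the specific reason the paper restricts to $\mathcal{Q}=\{\mathbf{x}:\|\mathbf{x}\|\le D\}$ (with $D$ controlled by $H$, $\sigma$, $q$, $p$, $\nu$) is that for $q>p+\nu$ the regularizer $\frac{\sigma}{q}\|\mathbf{x}\|^{q}$ is itself not $p^{th}$-order Hölder smooth on all of $\mathbb{R}^{d}$; the bound on $D$ is precisely what makes $\nabla^{p}d_{q}$ Hölder-$\nu$ with modulus $H/2$, and omitting this makes Lemma~\ref{lem:F_properties}(ii) false. Neither issue changes the overall strategy, which is the paper's.
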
 

\begin{proof}
    We begin the proof by constructing the hard function.

\subsubsection{Function Construction with Truncated Gaussian Smoothing} 

\emph{1. Non-smooth Function Construction. } We first construct the function
\begin{align*}
    g_t(\mathbf{x}) = \max_{1\leq k \leq t} r_k(\mathbf{x}) && where && \forall \ k \in [T], r_k(\mathbf{x}) =  \xi_k \inner{\mathbf{e}_{\alpha(k)}}{\mathbf{x}} - (k-1)\delta.
\end{align*}
$\xi_k \in \{-1, 1\}$, $\mathbf{e}$ is the standard basis, $\alpha$ is a permutation of $[T]$, and $\delta > 0$ is some parameter that we will choose later. Lemma \ref{lem:g_lipschitz} characterizes the properties of $g_t$ with proof in Appendix \ref{app:lemmas}.
\begin{restatable}{lemma}{lemglipschitz} \label{lem:g_lipschitz}
$\forall \ t \in [T]$, $g_t$ is convex and $1$-Lipschitz with respect to the $\ell_\infty$-norm, and also the $\ell_2$-norm.
\end{restatable}

\emph{2. Truncate Gaussian Smoothing. } Next, we smooth the function $g_t(\mathbf{x})$ with truncate Gaussian smoothing as in Definition \ref{def:TGS}. Given a parameter $\rho > 0$ and $p \in \mathbb{Z}^+$,
\begin{align*}
    G_t(\mathbf{x}) = S^p_\rho[g_t](\mathbf{x})
\end{align*}

Based on Lemma \ref{lem:TGS_Repeat}, we show that $G_t(\mathbf{x})$ satisfies the following lemma, with proof in Appendix \ref{app:lemmas}. \hspace{-20pt}
\begin{restatable}{lemma}{lemGproperties}   \label{lem:G_properties}$\forall \ t \in [T]$, $\forall \ \mathbf{x}, \mathbf{y} \in \R^d$,
    \begin{itemize}
        \item [(i)] $G_t(\mathbf{x})$ is convex and 1-Lipschitz, i.e., $G_t(\mathbf{x}) - G_t(\mathbf{y}) \leq \nrm{\mathbf{x}-\mathbf{y}}$.
        \item [(ii)] $g_t(\mathbf{x}) \leq G_t(\mathbf{x}) \leq g_t(\mathbf{x}) + \frac{5}{4}p \rho \sqrt{d}$.
        \item [(iii)] For some fixed $p \in \mathbb{Z}^+$, $\forall \ i \in [p]$, $\nrm{\nabla^i G_t(\mathbf{x}) - \nabla^i G_t(\mathbf{y})} \leq \left(\frac{2}{\rho}\right)^i  \nrm{\mathbf{x}-\mathbf{y}}$.
    \end{itemize}
\end{restatable}

\emph{3. Adding Uniform Convexity. } Now that the constructed function $G_t(\mathbf{x})$ is all-order smooth, we add to it the uniformly convex regularization. We define 
\begin{align*}
    & f_t(\mathbf{x}) = \beta G_t(\mathbf{x}) && f(\mathbf{x}) = f_T(\mathbf{x}) \\
    &F_t(\mathbf{x}) = f_t(\mathbf{x}) +  d_q(\mathbf{x}) \quad \text{for} \quad d_q(\mathbf{x}) = \frac{\sigma}{q} \norm{\mathbf{x}}^q, \quad \mathbf{x} \in \mathcal{Q} && F(\mathbf{x}) = F_T(\mathbf{x}),
\end{align*}
where $\beta > 0$ is a parameter that we will choose later, $\mathcal{Q} = \{ \mathbf{x}: \Vert \mathbf{x} \Vert_2 \leq D \}$\footnote{We would note that for the $q > p+\nu$ case, $F$ is guaranteed to be $p^{th}$-order smooth only in the bounded domain as constructed, since the regularization term $d_q(\mathbf{x})$ may not be $p^{th}$-order smooth on $\R^d$. The construction is inspired by that in \citep{juditsky2014primal}. This is not explicitly discussed in \citep{song2021unified, doikov2022lower, thomsen2024complexity}.} for $D \leq \left(\frac{H}{2^{1-\nu}C} \right)^\frac{1}{q-p-\nu}$ and $C = \sigma(q-1)\times \cdots \times(q-p)$.

\begin{restatable}{lemma}{lemFproperties} \label{lem:F_properties} For $F(\mathbf{x}) = f_T(\mathbf{x}) +  d_q(\mathbf{x})$ where $d_q(\mathbf{x}) = \frac{\sigma}{q} \norm{\mathbf{x}}^q$ and $\mathbf{x} \in \mathcal{Q}$,
    \begin{itemize} 
        \item [(i)] $F$ is uniformly convex function with degree $q$ and modulus $\sigma > 0$.
        \item [(ii)] $F(\mathbf{x})$ is $p^{th}$-order Hölder smooth with parameter $H=\frac{2^{p+1} \beta}{{\rho}^{p+\nu-1}}$, $\forall \ p \in \mathbb{Z}^+$.
    \end{itemize}
\end{restatable}

Therefore, by Lemma \ref{lem:F_properties}, the function constructed satisfies the desired uniform convexity and high-order smoothness conditions. Next, we characterize with Lemma \ref{lem:F_bound} the upper and lower bounds of the constructed function which will be used in the proof later.

\begin{restatable}{lemma}{lemFbound} \label{lem:F_bound}
    For $R(\mathbf{x}) = \beta \max_{k \in [T]} \xi_k \inner{\mathbf{e}_{\alpha(k)}}{\mathbf{x}} + \frac{\sigma}{q}\nrm{\mathbf{x}}^q$, we have
\begin{align*}
    R(\mathbf{x}) - \beta(T-1)\delta \leq F(\mathbf{x}) \leq R(\mathbf{x}) + \frac{5}{4} p \beta \rho \sqrt{d}.
\end{align*}
\end{restatable}

\subsubsection{Convergence Trajectory Generation} \label{sec:trajectory}
\emph{4. Trajectory Generation Procedure. } The trajectory is generated following a standard $T$-step iterative procedure same as outlined in \citep{guzman2015lower, doikov2022lower}:
\begin{itemize}
    \item [$\cdot$] For $t=1$, $\mathbf{x}_1$ is the first point of the trajectory and is chosen by initialization of some algorithm $\mc{A}$, independent of $F$. Subsequently, choose 
    \begin{align*}
        \alpha(1) \in \argmax_{k \in [T]} \left | \inner{\mathbf{e}_{\alpha(k)}}{\mathbf{x}_1}\right| &&  \xi_1 = \mathrm{sign}\left(\inner{\mathbf{e}_{\alpha(1)}}{\mathbf{x}_1}\right),
    \end{align*}
    after which a fixed $F_1(\mathbf{x})$ is generated.
    \item [$\cdot$] For $2 \leq t \leq T$, at the beginning of each such iteration, we have access to $\mathbf{x}_1, \cdots, \mathbf{x}_{t-1}$, the function $F_{t-1}$, and its gradient information, which we denote as $\mc{I}_{t-1}(\mathbf{x}) = \{F_{t-1}, \nabla F_{t-1}, \cdots, \nabla^p F_{t-1}\}$. The algorithm $\mc{A}$ generates the next point with this information: $\mathbf{x}_t = \mc{A}(\mc{I}_{t-1}(\mathbf{x}_1), \cdots, \mc{I}_{t-1}(\mathbf{x}_{t-1}))$. Then choose
    \begin{align*}
        \alpha(t) \in \argmax_{k \in [T] \setminus \{\alpha(i): i<t\} } \left | \inner{\mathbf{e}_{\alpha(k)}}{\mathbf{x}_t}\right| &&  \xi_t = \mathrm{sign}\left(\inner{\mathbf{e}_{\alpha(t)}}{\mathbf{x}_t}\right)
    \end{align*}
    after which a fixed $F_t(\mathbf{x})$ is generated for the next iteration.
\end{itemize}

\emph{5. Indistinguishability of $F_t$ and $F$ for Trajectory Generation. } It's important to note that the trajectory $\mathbf{x}_1, \cdots, \mathbf{x}_{T}$ is generated based on \emph{a sequence of functions} $F_1, \cdots, F_T$, whereas our object of analysis should be just \emph{one hard function} $F = F_T$. Here we show:
\begin{lemma}
    The trajectory $\mathbf{x}_1, \cdots, \mathbf{x}_{T}$ generated by applying an algorithm $\mc{A}$ iteratively on the sequence of functions $F_1, \cdots, F_T$, with up to $p^{th}$-order oracle access, is the same as the trajectory generated applying $\mc{A}$ directly on $F$ when oracle access pertains only local information within an $\ell_\infty$-ball with radius $\delta / 2$.
\end{lemma} 
\begin{proof}
    The idea is to show that $\forall \ 2 \leq t \leq T$, the function $g_t$ coincides with $g_T$ (so that $F_t$ coincides with $F_T$ in terms of generating $\mathbf{x}_{t+1}$, i.e., $\mc{I}_{t} = \mc{I}_{T}$) under some mild conditions. Similar proof can be found in \citep[Section 3]{guzman2015lower, doikov2022lower}. By construction, $\forall \ t \in [T]$,
    \begin{align*}
        g_t(\mathbf{x}) = \max_{1\leq k \leq t} r_k(\mathbf{x}) = \max \left \{ \max_{1\leq k \leq s} r_k(\mathbf{x}), \max_{s < k \leq t} r_k(\mathbf{x}) \right \} = \max \left \{ g_s(\mathbf{x}), \max_{s < k \leq t} r_k(\mathbf{x}) \right \}
    \end{align*}
    Furthermore, $\alpha(s) \in \argmax_{k \in [T] \setminus \{\alpha(i): i<s\} } \left | \inner{\mathbf{e}_{\alpha(k)}}{\mathbf{x}_s}\right|$ and $\xi_s = \mathrm{sign}\left(\inner{\mathbf{e}_{\alpha(s)}}{\mathbf{x}_s}\right)$, therefore
    \begin{align*}
        g_s(\mathbf{x}_s) &= \max_{1 \leq k \leq s} \xi_k \inner{\mathbf{e}_{\alpha(k)}}{\mathbf{x}_s} - (k-1)\delta \geq \max_{1 \leq k \leq s} \xi_k \inner{\mathbf{e}_{\alpha(k)}}{\mathbf{x}_s} - (s-1)\delta \\
        &\geq \left |\inner{\mathbf{e}_{\alpha(s)}}{\mathbf{x}_s}\right| - (s-1)\delta \geq \max_{s < k \leq t} \xi_k \inner{\mathbf{e}_{\alpha(k)}}{\mathbf{x}_s} - (s-1)\delta \\
        &\geq \max_{s < k \leq t} \xi_k \inner{\mathbf{e}_{\alpha(k)}}{\mathbf{x}_s} - (k-1)\delta + \delta \hspace{30pt} (k,s \in \mathbb{Z}^+, k>s \implies k \geq s+1) 
    \end{align*}
    If we limit the information access within an $\ell_\infty$-ball with radius $\delta / 2$ when searching for the next point $\mathbf{x}_{s+1}$ from $\mathbf{x_s}$, we then establish a local region $\forall \mathbf{x}$, $\nrm{\mathbf{x} - \mathbf{x_s}}_\infty \leq \frac{\delta}{2}$. Further by Lemma \ref{lem:g_lipschitz} that $g_s$ (also $\xi_k\inner{\mathbf{e}_{\alpha(k)}}{\mathbf{x}}$) is $1$-Lipschitz with respect to the $\ell_\infty$ norm, we have $\forall \ k $ such that $s < k \leq t$,
    \begin{align*}
        g_s(\mathbf{x}_s) &\geq \xi_k \inner{\mathbf{e}_{\alpha(k)}}{\mathbf{x}_s} - (k-1)\delta + 2 \nrm{\mathbf{x} - \mathbf{x_s}}_\infty \\
        &\geq \xi_k \inner{\mathbf{e}_{\alpha(k)}}{\mathbf{x}_s} - (k-1)\delta + \left[g_s(\mathbf{x}_s) - g_s(\mathbf{x})\right] + \left[\xi_k \inner{\mathbf{e}_{\alpha(k)}}{\mathbf{x}} - \xi_k \inner{\mathbf{e}_{\alpha(k)}}{\mathbf{x}_s}\right],
    \end{align*}
    which implies that $g_s(\mathbf{x}) \geq \max_{s < k \leq t}\xi_k \inner{\mathbf{e}_{\alpha(k)}}{\mathbf{x}} - (k-1)\delta = \max_{s < k \leq t} r_k(\mathbf{x})$. This concludes that $\forall \ \mathbf{x}$ such that $\nrm{\mathbf{x} - \mathbf{x_s}}_\infty \leq \frac{\delta}{2}$, $g_t(\mathbf{x}) = \max \left \{ g_s(\mathbf{x}), \max_{s < k \leq t} r_k(\mathbf{x}) \right \} = g_s(\mathbf{x})$, which further implies $F_t(\mathbf{x}) = F_s(\mathbf{x})$. Letting $t=T$ we have $\forall \ t \in [T]$, $F_t(\mathbf{x}) = F_T(\mathbf{x})$ for $\nrm{\mathbf{x} - \mathbf{x_t}}_\infty \leq \frac{\delta}{2}$.
\end{proof}

\subsubsection{Lower Bound Derivation}
\emph{6. Bounding the Optimality Gap. } The following lemma bounds optimality gap, whose proof is based on Lemma \ref{lem:F_bound}, and is presented in Appendix \ref{app:lemmas}. \vspace{-5pt}
\begin{restatable}{lemma}{lemGapbound} \label{lem:Gap_bound}
   $F(\mathbf{x}_T) - F(\mathbf{x}^\ast) \geq - \beta(T-1)\delta - \frac{5}{4} p \beta \rho \sqrt{d}  + \frac{q-1}{q}\left( \frac{\beta^q}{\sigma T^\frac{q}{2}}\right)^\frac{1}{q-1}$. \vspace{-5pt}
\end{restatable}
\emph{7. Setting the parameters. }
By Definition \ref{def:TGS} and Lemma \ref{lem:TGS_properties} (i), we know that $S_\rho[g_t](\mathbf{x}), \nabla S_\rho[g_t](\mathbf{x})$ depends on the value of $g_t(\mathbf{x})$ within an $\ell_\infty$-ball of radius $\rho$. Therefore inductively, we see that for $F(\mathbf{x}) = \beta S^p_\rho[g_T](\mathbf{x}) + \frac{\sigma}{q}\nrm{\mathbf{x}}^q$, $F(\mathbf{x}), \nabla F(\mathbf{x}), \cdots, \nabla^p F(\mathbf{x})$ depends on the value of $F(\mathbf{x})$ within an $\ell_\infty$-ball of radius $p \rho$. For our construction to hold, we also need $F_t(\mathbf{x})$ and $F(\mathbf{x})$ to be indistinguishable $\forall \ t \in [T]$, which is true within an $\ell_\infty$-ball of radius $\delta / 2$. 

Therefore, we set $\delta = 2p \rho$, so that for the purpose of oracle access at $\mathbf{x}_t$ (computing (high-order) gradients of $F$), it's indistinguishable to replace $F(\mathbf{x})$ with $F_t(\mathbf{x})$, and the sequence generated as in Section \ref{sec:trajectory} is the same as that directly applying some $p^{th}$-order algorithm $\mc{A}$ on $F(\mathbf{x})$. In other words, $F(\mathbf{x})$ and the generated $\mathbf{x}_T$ serve as valid components for deriving the lower bound. 

As a result,
$F(\mathbf{x}_T) - F(\mathbf{x}^\ast) \geq - 2p \beta \rho (T-1 + \frac{5}{8}\sqrt{d}) + \frac{q-1}{q}\left( \frac{\beta^q}{\sigma T^\frac{q}{2}}\right)^\frac{1}{q-1}$.
Let $T \geq \sqrt{d} - 1 \geq \frac{5}{8}\sqrt{d} - 1$, then
$F(\mathbf{x}_T) - F(\mathbf{x}^\ast) \geq - 4 p\beta \rho T + \frac{q-1}{q}\left( \frac{\beta^q}{\sigma T^\frac{q}{2}}\right)^\frac{1}{q-1}$.
By letting $4p\beta \rho T = \frac{q-1}{2q}\left( \frac{\beta^q}{\sigma T^\frac{q}{2}}\right)^\frac{1}{q-1}$, we solve for $\rho = \frac{q-1}{8pq} \sigma^{-\frac{1}{q-1}} \beta^\frac{1}{q-1} T^\frac{2-3q}{2(q-1)} = c_q \sigma^{-\frac{1}{q-1}} \beta^\frac{1}{q-1} T^\frac{2-3q}{2(q-1)}$, in which $c_q = \frac{q-1}{8pq}$, and at the same time, \vspace{-10pt}
\begin{align}  \label{eq:optimality_gap} 
    F(\mathbf{x}_T) - F(\mathbf{x}^\ast) &\geq \frac{q-1}{2q}\left( \frac{\beta^q}{\sigma T^\frac{q}{2}}\right)^\frac{1}{q-1}. \vspace{-20pt}
\end{align}

By the construction of $F(\mathbf{x})$ and Lemma \ref{lem:F_properties}, we know that $F(\mathbf{x})$ is $p^{th}$-order Hölder smooth with parameter $H = \frac{2^{p+1} \beta}{{\rho}^{p+\nu-1}}$.
Plugging in the value of $\rho$, we have
\begin{align*}
    H = 2^{p+1} c_q^{-(p+\nu-1)} \sigma^{\frac{p+\nu-1}{q-1}} \beta^{-\frac{p-q+\nu}{q-1}} T^\frac{(p+\nu-1)(3q-2)}{2(q-1)},
\end{align*}
equivalently, $\beta = \left(\frac{Hc_q^{p+\nu-1}\sigma^{-\frac{p+\nu-1}{q-1}}}{2^{p+1}} \right)^{-\frac{q-1}{p-q+\nu}} T^\frac{(p+\nu-1)(3q-2)}{2(p-q+\nu)}$.
Plugging the value of $\beta$ back into Eq. \eqref{eq:optimality_gap}, we have
\begin{align*}
    F(\mathbf{x}_T) - F(\mathbf{x} ^\ast) &\geq \frac{q-1}{2q} \sigma^{-\frac{1}{q-1}}  \left(\frac{H c_q^{p+\nu-1}\sigma^{-\frac{p+\nu-1}{q-1}}}{2^{p+1}} \right)^{-\frac{q}{p-q+\nu}} T^\frac{q[3(p+\nu)-2]}{2(p-q+\nu)} \\
    &= 4p\left(\frac{q-1}{8pq}\right)^\frac{(p+\nu)(1-q)}{p-q+\nu} \sigma \left(\frac{H \sigma^{-1}}{2^{p+1}} \right)^{-\frac{q}{p-q+\nu}} T^\frac{q[3(p+\nu)-2]}{2(p-q+\nu)}.
\end{align*}

 We complete the proof for Theorem \ref{thm:poly-rate}  by letting $F(\mathbf{x}_T) - F(\mathbf{x}^\ast) \leq \epsilon$, from which we solve for $T \geq \left(2^{(2p+2\nu+pq-q)/q}\right)^{-\frac{2}{3(p+\nu)-2}} p^\frac{2(q-p-\nu)}{q[3(p+\nu)-2]}\left(\frac{q-1}{8pq}\right)^{\frac{2(p+\nu)(q-1)}{q[3(p+\nu)-2]}}   \left(\frac{H}{\sigma}\right)^\frac{2}{3(p+\nu)-2} \left(\frac{\sigma}{\epsilon}\right)^\frac{2(q-p-\nu)}{q[3(p+\nu)-2]}$.
\end{proof}

\section{Lower Bound for the $q < p+\nu$ Case} \label{sec:case2}
\begin{theorem} \label{thm:loglog-rate}
    For any $T$-step deterministic algorithm $\mc{A}$ with oracle access up to the $p^{th}$ order, there exists a convex function $f(\mathbf{x})$ whose $p^{th}$-order derivative is Hölder continuous of degree $\nu$ with modulus $H$ and a corresponding $F(\mathbf{x})=f(\mathbf{x})+\frac{\sigma}{q}\nrm{\mathbf{x}}^q$ with regularization that is uniformly convex of degree $q$ with modulus $\sigma$, such that $q < p + \nu$, it takes 
    \begin{align*}
        T \in \Omega\left(\left(\frac{H}{\sigma}\right)^\frac{2}{3(p+\nu)-2}+ \log\log\left(\left(\frac{\sigma^{p+\nu}}{H^q}\right)^\frac{1}{p+\nu-q}\frac{1}{\epsilon}\right)\right)
    \end{align*}
    steps to reach an $\epsilon$-approximate solution $\mathbf{x}_T$ satisfying $F(\mathbf{x}_T) - F(\mathbf{x}^\ast) \leq \epsilon$.
\end{theorem}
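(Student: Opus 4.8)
The plan is to prove the two summands of the bound as separate lower bounds --- a ``polynomial'' bound $\Omega\big((H/\sigma)^{2/(3(p+\nu)-2)}\big)$ and a ``$\log\log$'' bound --- and then combine them via $\max(a,b)\ge\tfrac12(a+b)$. Throughout I would use Nesterov's resisting-oracle framework \citep{nesterov2018lectures}: a trajectory $\mathbf{x}_1,\dots,\mathbf{x}_T$ generated against an adaptively maintained family of functions, exactly in the spirit of Section~\ref{sec:trajectory}, generalizing the second-order construction of \citet{arjevani2019oracle} and its $p^{th}$-order extension \citep{kornowski2020high} so as to carry the Hölder exponent $\nu$ and the uniform-convexity degree $q$. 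Concretely, take $f$ to be a ``chain''-type function --- a maximum (or sum) over coordinates in which $\mathbf{e}_k$ affects the oracle only after the algorithm has queried in a direction aligned with $\mathbf{e}_k$ --- rescaled so that $\nabla^p f$ is Hölder continuous of degree $\nu$ with modulus $H$, and set $F = f + \frac{\sigma}{q}\nrm{\mathbf{x}}^q$. As the footnote near Lemma~\ref{lem:F_properties} already notes, when $q<p+\nu$ the regularizer $\frac{\sigma}{q}\nrm{\mathbf{x}}^q$ is not $p^{th}$-order Hölder-$\nu$ smooth near the origin (and, when $q\le p$, not even $C^p$ there), so --- as in \citep{juditsky2014primal} --- I would run the construction on a bounded domain $\mathcal{Q}$, chosen large enough to contain $\mathbf{x}_1,\dots,\mathbf{x}_T$ and $\mathbf{x}^\ast$ yet shaped (and the whole instance translated away from the origin) so that $F$ is genuinely $p^{th}$-order Hölder-$\nu$ smooth with parameter $H$ and uniformly convex of degree $q$ with modulus $\sigma$ there.

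For the polynomial term I would examine only the first $\Theta\big((H/\sigma)^{2/(3(p+\nu)-2)}\big)$ iterations. A $T$-step $p^{th}$-order algorithm activates at most $O(T)$ coordinates of the chain, so $\mathbf{x}_T$ stays essentially supported on those coordinates while the minimizer of the chain part lies farther out; pushing the residual of the chain through the curvature $\frac{\sigma}{q}\nrm{\cdot}^q$ lower-bounds $F(\mathbf{x}_T)-F(\mathbf{x}^\ast)$ by a quantity that remains above $\epsilon$ as long as $T\lesssim (H/\sigma)^{2/(3(p+\nu)-2)}$, the relevant ``floor'' level being the basin scale $\epsilon_0\asymp(\sigma^{p+\nu}/H^q)^{1/(p+\nu-q)}$ identified in the next step, so this phase bites precisely when $\epsilon\lesssim\epsilon_0$. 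For $p=2,q=2,\nu=1$ this reproduces the $(L/\mu)^{2/7}$ phase of \citet{arjevani2019oracle} and for general $p$ the $(L/\mu)^{2/(3p+1)}$ phase of \citet{kornowski2020high}; the only new work is threading $\nu$ and $q$ through the smoothness/uniform-convexity bookkeeping.

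For the $\log\log$ term I would argue locally near $\mathbf{x}^\ast$. Given only oracle information up to order $p$ on a ball of radius $r_t=\nrm{\mathbf{x}_t-\mathbf{x}^\ast}$, Hölder-$\nu$ continuity of $\nabla^p f$ means $f$ is pinned down only up to a Taylor-remainder-scale perturbation of magnitude $\asymp H r_t^{p+\nu}$; the adversary uses such a perturbation to move the true minimizer so that, by uniform convexity (curvature $\sigma$, degree $q$), any next iterate obeys $\nrm{\mathbf{x}_{t+1}-\mathbf{x}^\ast}\gtrsim (H/\sigma)^{1/q}\,r_t^{(p+\nu)/q}$. Since $q<p+\nu$ the exponent $(p+\nu)/q$ exceeds $1$, so iterating gives $s_{t+1}\gtrsim s_t^{(p+\nu)/q}$ for the rescaled distance $s_t=r_t/r^\ast$, where $r^\ast\asymp(\sigma/H)^{1/(p+\nu-q)}$ is the fixed point of $r\mapsto(H/\sigma)^{1/q}r^{(p+\nu)/q}$; hence $\log\log\frac{1}{s_T}\lesssim T\log\frac{p+\nu}{q}+O(1)$, and converting $s_T$ to the optimality gap via uniform convexity (so that $\epsilon_0\asymp\sigma(r^\ast)^q\asymp(\sigma^{p+\nu}/H^q)^{1/(p+\nu-q)}$) yields $T=\Omega\big(\log\log((\sigma^{p+\nu}/H^q)^{1/(p+\nu-q)}/\epsilon)\big)$.

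Taking the maximum of the two bounds gives the claimed $\Omega\big((H/\sigma)^{2/(3(p+\nu)-2)}+\log\log(\cdots)\big)$, which is also intuitively right: to reach accuracy $\epsilon\le\epsilon_0$ one must first descend to the basin scale $\epsilon_0$, costing the polynomial term, and then contract from $\epsilon_0$ to $\epsilon$, costing the $\log\log$ term. I expect the main obstacle to be the joint bookkeeping: producing a single instance (or adaptively maintained family) that is \emph{honestly} $p^{th}$-order Hölder-$\nu$ smooth with parameter exactly $H$ --- in particular handling the regularizer $\nrm{\cdot}^q$, which when $q<p+\nu$ degrades smoothness near the origin and so forces the bounded, origin-avoiding domain and a careful verification that $\mathcal{Q}$ still contains the whole trajectory and $\mathbf{x}^\ast$ --- while simultaneously exhibiting both the long ``chain'' phase and the local superlinear-resistance phase, and tracking every $(p,q,\nu,H,\sigma)$-dependent constant finely enough that the two regimes glue together at the stated thresholds.
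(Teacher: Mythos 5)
Your high-level plan matches the paper's: construct a rescaled Nesterov chain $+\frac{\sigma}{q}\nrm{\cdot}^q$ instance on a bounded domain $\mathcal{Q}$ (handling the issues you correctly anticipate in the footnote), and prove the polynomial term and the $\log\log$ term separately before combining. For the polynomial term your sketch is consistent with what the paper actually does (activate $O(T)$ chain coordinates, show $\nrm{\mathbf{x}_T-\mathbf{x}^\ast}$ stays bounded below by the undiscovered coordinate $y_T$ of the minimizer, and push that through $\frac{\sigma}{q}\nrm{\cdot}^q$), though all the quantitative work is in bounding $\nrm{\mathbf{y}}$ and $y_T$ in terms of the chain parameter $\gamma$, which you leave implicit.

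For the $\log\log$ term, however, your route is genuinely different and has a quantitative error that makes the stated intermediate inequality false. You use a local indistinguishability / resetting argument and claim $\nrm{\mathbf{x}_{t+1}-\mathbf{x}^\ast}\gtrsim (H/\sigma)^{1/q}\,r_t^{(p+\nu)/q}$, which you derived from the \emph{function-value} form of uniform convexity and the Taylor-remainder scale $H r_t^{p+\nu}$. But that only upper-bounds the possible minimizer displacement; for a lower bound the adversary must \emph{realize} the shift, and the achievable shift is controlled by the \emph{gradient} form: a perturbation vanishing to order $p$ at $\mathbf{x}_t$ with Hölder-$\nu$ constant $H$ has gradient size $\lesssim H r^{p+\nu-1}$ on a ball of radius $r$, so the minimizer moves by at most $\bigl(H r^{p+\nu-1}/\sigma\bigr)^{1/(q-1)}$. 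Since $(p+\nu-1)/(q-1)>(p+\nu)/q$ whenever $q<p+\nu$, for small $r$ this is strictly smaller than your claimed $\bigl(H/\sigma\bigr)^{1/q} r^{(p+\nu)/q}$. Concretely for $p=2,\nu=1,q=2$ your exponent is $3/2$, i.e.\ you would be claiming that no algorithm can contract faster than $r_{t+1}\gtrsim r_t^{3/2}$, while cubic-regularized Newton achieves $r_{t+1}\lesssim r_t^2$ locally, so the intermediate claim is violated. Because $\Omega(\log\log(1/\epsilon))$ is insensitive to the base, the headline statement would survive with the corrected exponent, but the step as written is wrong. The paper avoids this entirely: it never perturbs; it fixes a single chain with orthogonal bases $\mathbf{v}_i$ (so $\mathbf{x}_T \perp \mathbf{v}_{t_1+T}$ is an exact zero-chain fact) and proves a per-coordinate recurrence for the \emph{fixed} minimizer (Lemma~\ref{lem:lem-3}), which yields the base $p+\nu-1$ and sidesteps the adaptive-adversary bookkeeping you would otherwise have to carry (maintaining fixed $H$, $\nu$, $q$, $\sigma$ across all $2^T$ consistent perturbation sequences).
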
 
\begin{proof}
Similar to all other lower bound proofs, we construct such a function that satisfies the uniformly convex and Hölder smooth conditions and show that it requires at least the number of iterations stated in the theorem. The construction is generally based on Nesterov's hard function \citep{nesterov2018lectures}, and generalizes the construction in \citep{arjevani2019oracle} to higher-order and the construction in \citep{kornowski2020high} to Hölder smooth functions as well as uniformly convex functions.

\subsection{Function Construction based on Nesterov's Hard Function}
A direct generalization of Nestrov's construction for first- and second-order lower bounds \citep[Section 2.1.2, 4.3.1]{nesterov2018lectures} to the $p^{th}$-order Hölder smooth setting takes the form 
$\tilde{f}(\mathbf{x}) = \frac{1}{p+\nu} \sum_{i=1}^{\tilde{T}} \left| x_i - x_{i+1} \right|^{p+\nu} - \gamma x_1 + \frac{\tilde{\sigma}}{q}\nrm{\mathbf{x}}^q$,
for $q < p + \nu$, $\nu \in [0,1]$, which is uniformly convex by the regularization. We further add a coefficient so that the function $p^{th}$-order Hölder smooth with the desired parameter $H$ and further on top of this a set of orthogonal basis $\mathbf{v}_i$, $\forall \ i \in [\tilde{T}]$ to limit the access of coordinates through the iterations:
\begin{align*}
    f(\mathbf{x}) &= 
    \frac{H}{2^{p+\nu+1} (p+\nu-1)!} \left(\frac{1}{p+\nu} \sum_{i=1}^{\tilde{T}} \left| \inner{\mathbf{v}_i}{\mathbf{x}} - \inner{\mathbf{v}_{i+1}}{\mathbf{x}} \right|^{p+\nu} - \gamma \inner{\mathbf{v}_1}{\mathbf{x}} \right) + \frac{\sigma}{q}\nrm{\mathbf{x}}^q,
\end{align*}
for $\sigma =  \frac{H \tilde{\sigma}}{2^{p+\nu+1} (p+\nu-1)!}$, or equivalently, $\tilde{\sigma} = \frac{2^{p+\nu+1} (p+\nu-1)! \sigma}{H}$. $\mathbf{v}_i$ is chosen iteratively to be orthogonal to $\mathbf{x}_1, \cdots, \mathbf{x}_i$ and $\mathbf{v}_1, \cdots, \mathbf{v} _{i-1}$. Similar to \citep[Lemma 7]{arjevani2019oracle}, one can show that the oracle information of $f(\mathbf{x}_i)$, $\forall \ i \leq t$ does not depend on $\mathbf{v}_{t+1}, \cdots, \mathbf{v}_{\tilde{T}}$, so that the iterative construction of $\mathbf{v}_i$ is valid, i.e., does not affect the $\mathbf{x}_i$ generated running an algorithm on $f$. Now we characterize the relation between $\tilde{f}$ and $f$.
\begin{restatable}{lemma}{lemnormequal} \label{lem:norm-equal}
 $\mathbf{x}^\ast = \argmin_{\mathbf{x}} f(\mathbf{x})$, $\mathbf{y} = \argmin_{\mathbf{x}} \tilde{f}(\mathbf{x})$. (i) $\forall i \in [\tilde{T}]$, $\inner{\mathbf{v}_i}{\mathbf{x}^\ast} = \mathbf{y}_i$. (ii) $\nrm{\mathbf{x}^\ast} = \nrm{\mathbf{y}}$.
\end{restatable}

Next, we characterize the convexity and smoothness of the constructed function. Specifically, we can show with the proof in Appendix \ref{app:sec2} that $f$ satisfies the following lemma.
\begin{restatable}{lemma}{lemffproperties} \label{lem:ff_properties} $f(\mathbf{x})$ is (i) uniformly convex with degree $q$ and parameter $\sigma$. (ii) $p^{th}$-order Hölder smooth with degree $\nu$ and parameter $H$.
\end{restatable}

The analysis of \citep{nesterov2018lectures} then derives the lower bound based on the closed-form optimal solution that minimizes the hard function. For our generalized construction of $f$, however, the closed-form solution is hard to obtain. As in \citep{arjevani2019oracle}, we instead analyze some properties of $f$ for each of these lower bounds. For simplicity, we state the properties for function $\tilde{f}$, and since $f$ is simply a scaling of $\tilde{f}$, the properties also apply to $f$ with a difference of constants. To prove Theorem \ref{thm:loglog-rate}, we show separately for the $\left(\frac{H}{\sigma}\right)^\frac{2}{3(p+\nu)-2}$ term and the $\log\log\left(\left(\frac{\sigma^{p+\nu}}{H^q}\right)^\frac{1}{p+\nu-q}\frac{1}{\epsilon}\right)$ term. The derivation is largely based on some key lemmas whose complete proof is in Appendix \ref{app:sec2}.

\subsection{$T \in \Omega\left(\left(\frac{H}{\sigma}\right)^\frac{2}{3(p+\nu)-2}\right)$} \label{sec:kappa_rate}

Since we cannot solve for a closed form solution from $\argmin_{\mathbf{x}} \tilde{f}(\mathbf{x})$, we need to alternatively bound the solution in a relative scale. One key observation is that the coordinates of the optimal solution form a decreasing sequence \citep{arjevani2019oracle, carmon2021lower}, and their relative relation can be characterized as in Lemma \ref{lem:lem-2} (i) utilizing the first-order optimality condition. Based on the properties of each coordinate, one can relate them to the norm of the optimal solution as in Lemma \ref{lem:lem-2} (iii).

\begin{restatable}{lemma}{lemlemtwo} \label{lem:lem-2} For $\mathbf{y} = \argmin_{\mathbf{x}} \tilde{f}(\mathbf{x})$,
    \begin{itemize}
        \item [(i)]  $\forall t \in [\tilde{T}]$, $y_t \geq y_1 - (t-1) \gamma^\frac{1}{p+\nu-1}$.
        \item [(ii)]For $\tilde{T} = \left\lfloor \frac{y_1}{\gamma^\frac{1}{p+\nu-1}} +1 \right\rfloor$, $y_1 \leq \gamma^\frac{1}{p+\nu-1} + \sqrt{\frac{2\gamma^\frac{p+\nu}{p+\nu-1}}{\tilde{\sigma}\nrm{\mathbf{y}}^{q-2}}}$.
        \item [(iii)] For $\gamma \geq \tilde{\sigma}^\frac{p+\nu-1}{p+\nu-2} \nrm{\mathbf{y}}^\frac{(p+\nu-1)(q-2)}{p+\nu-2}$, $\forall t \in [\tilde{T}]$, $
        y_t \geq \frac{\gamma^\frac{p+\nu}{2(p+\nu-1)}}{2^{p+\nu+1}\tilde{\sigma}^\frac{1}{2}\nrm{\mathbf{y}}^\frac{q-2}{2}} + \left(\frac{1}{2} - i \right) \gamma^\frac{1}{p+\nu-1}
    $.
    \end{itemize}
\end{restatable}
Then the bound on the norm of the optimal solution can be established in the following lemma.
\begin{restatable}{lemma}{lemnormbound} \label{lem:norm-bound}
    $\nrm{\mathbf{y}} \leq \frac{2^\frac{2}{3q-2} \gamma^\frac{3(p+\nu)-2}{(p+\nu-1)(3q-2)}}{\tilde{\sigma}^\frac{3}{3q-2}}$.
\end{restatable}

The final step is to relate this norm to the optimality gap with the property of uniform convexity. By Lemma \ref{lem:norm-equal} and Lemma \ref{lem:lem-2} (iii), 
$\langle \mathbf{v}_T, \mathbf{x}^\ast \rangle = y_T \geq \frac{\gamma^\frac{p+\nu}{2(p+\nu-1)}}{2^{p+\nu+1}\tilde{\sigma}^\frac{1}{2}\nrm{\mathbf{y}}^\frac{q-2}{2}} + \left(\frac{1}{2} - T \right) \gamma^\frac{1}{p+\nu-1}$.
Therefore, with $\mathbf{v}_T$ and $\mathbf{x}_T$ orthogonal to each other by construction,
\begin{align*}
    f(\mathbf{x}_T) - f(\mathbf{x}^\ast) &\geq \frac{\sigma}{q} \nrm{\mathbf{x}_{T} - \mathbf{x}^\ast}^q = \frac{\sigma}{q} \left(\sum_{i=1}^{\tilde{T}}\left( \langle \mathbf{v}_i, \mathbf{x}_{T} - \mathbf{x}^\ast\rangle \right)^2 \right)^\frac{q}{2} \geq \frac{\sigma}{q} \left(\left( \langle \mathbf{v}_{T}, \mathbf{x}_{T} - \mathbf{x}^\ast\rangle \right)^2 \right)^\frac{q}{2} \\
    &= \frac{\sigma}{q}\left( \langle \mathbf{v}_{T}, \mathbf{x}^\ast\rangle \right)^q \geq \frac{\sigma}{q}\left( \frac{\gamma^\frac{p+\nu}{2(p+\nu-1)}}{2^{p+\nu+1}\tilde{\sigma}^\frac{1}{2}\nrm{\mathbf{y}}^\frac{q-2}{2}} + \left(\frac{1}{2} - T \right) \gamma^\frac{1}{p+\nu-1}  \right)^q 
\end{align*}
In order to achieve $f(\mathbf{x}_T) - f(\mathbf{x}^\ast) \leq \epsilon$, we have $\frac{\sigma}{q}\left( \frac{\gamma^\frac{p+\nu}{2(p+\nu-1)}}{2^{p+\nu+1}\tilde{\sigma}^\frac{1}{2}\nrm{\mathbf{y}}^\frac{q-2}{2}} + \left(\frac{1}{2} - T \right) \gamma^\frac{1}{p+\nu-1}  \right)^q  \leq \epsilon$,
from which we can solve for
$T \geq \frac{\gamma^\frac{p+\nu-2}{2(p+\nu-1)}}{2^{p+\nu+1}\tilde{\sigma}^\frac{1}{2}\nrm{\mathbf{y}}^\frac{q-2}{2}} + \frac{1}{2} - \left( \frac{q\epsilon}{\sigma\gamma^\frac{q}{p+\nu-1}}\right)^\frac{1}{q}$.
For $\epsilon \leq \frac{\gamma^\frac{q}{p+\nu-1}\sigma}{2^qq}$, we have $\frac{1}{2} - \left( \frac{q\epsilon}{\sigma\gamma^\frac{q}{p+\nu-1}}\right)^\frac{1}{q} \geq 0$. Therefore, $T \geq \frac{\gamma^\frac{p+\nu-2}{2(p+\nu-1)}}{2^{p+\nu+1}\tilde{\sigma}^\frac{1}{2}\nrm{\mathbf{y}}^\frac{q-2}{2}}$.

By Lemma \ref{lem:norm-bound}, we know that 
$\nrm{\mathbf{y}} \leq \frac{2^\frac{2}{3q-2} \gamma^\frac{3(p+\nu)-2}{(p+\nu-1)(3q-2)}}{\tilde{\sigma}^\frac{3}{3q-2}}$.
Therefore, for $\mathbf{x}_0 = \mathbf{0}$, by Lemma \ref{lem:norm-equal}, $\nrm{\mathbf{x}_0 - \mathbf{x}^\ast} = \nrm{\mathbf{x}^\ast} = \nrm{\mathbf{y}} \leq \frac{2^\frac{2}{3q-2} \gamma^\frac{3(p+\nu)-2}{(p+\nu-1)(3q-2)}}{\tilde{\sigma}^\frac{3}{3q-2}}$. 
To satisfy the condition $\nrm{\mathbf{x}_0 - \mathbf{x}^\ast} \leq D$, we let $\frac{2^\frac{2}{3q-2} \gamma^\frac{3(p+\nu)-2}{(p+\nu-1)(3q-2)}}{\tilde{\sigma}^\frac{3}{3q-2}} \leq D$,
then we can solve for $\gamma \leq 2^{-\frac{2(p+\nu-1)}{3(p+\nu)-2}} D^\frac{(p+\nu-1)(3q-2)}{3(p+\nu)-2} \tilde{\sigma}^\frac{3(p+\nu-1)}{3(p+\nu)-2}$. Plug this as well as $\nrm{\mathbf{y}} \leq D$ into the lower bound on $T$ we have
\begin{align*}
    T &\geq \frac{\left(2^{-\frac{2(p+\nu-1)}{3(p+\nu)-2}} D^\frac{(p+\nu-1)(3q-2)}{3(p+\nu)-2} \tilde{\sigma}^\frac{3(p+\nu-1)}{3(p+\nu)-2}\right)^\frac{p+\nu-2}{2(p+\nu-1)}}{2^{p+\nu+1}\tilde{\sigma}^\frac{1}{2}D^\frac{q-2}{2}} = 2^{-\frac{p+\nu-2}{3(p+\nu)-2}-(p+\nu+1)} D^\frac{2(p+\nu-q-1)}{3(p+\nu)-2} \tilde{\sigma}^{-\frac{2}{3(p+\nu)-2}}
\end{align*}
Plugging in $\tilde{\sigma} = \frac{2^{p+\nu+1} (p+\nu-1)! \sigma}{H}$, we have $T \in \Omega \left( \left(\frac{H}{\sigma}\right)^\frac{2}{3(p+\nu)-2} \right)$.

\subsection{$T \in \Omega\left(\log\log\left(\left(\frac{\sigma^{p+\nu}}{H^q}\right)^\frac{1}{p+\nu-q}\frac{1}{\epsilon}\right)\right)$}
For the $\log\log$ term, we follow a similar narrative as in Section \ref{sec:kappa_rate}, starting from characterizing the per-coordinate relation of the optimal solution.
\begin{restatable}{lemma}{lemlemthree} \label{lem:lem-3} For $\mathbf{y} = \argmin_{\mathbf{x}} \tilde{f}(\mathbf{x})$, let $t_1 \in [\tilde{T}]$ be such that $y_{t_1} > \frac{1}{p+\nu-1}\tilde{\sigma}^\frac{1}{p+\nu-2}\nrm{\mathbf{y}}^\frac{q-2}{p+\nu-2}$ and $y_{t_1+1} \leq \frac{1}{p+\nu-1}\tilde{\sigma}^\frac{1}{p+\nu-2}\nrm{\mathbf{y}}^\frac{q-2}{p+\nu-2}$. Then
\begin{itemize}
    \item [(i)] $\forall i \in [\tilde{T}]$, $y_i = y_{i+1} + \left(\tilde{\sigma} \nrm{\mathbf{y}}^{q-2}\sum_{j={i+1}}^{\tilde{T}} y_j\right)^{\frac{1}{p+\nu-1}}$ and $y_{i+1} \leq \frac{1}{\tilde{\sigma}\nrm{\mathbf{y}}^{q-2}} y_i^p$
    \item [(ii)] $\forall i \geq t_1$, $\left(\frac{1}{c_{p,\nu}}\right)^{p+\nu-1} \frac{1}{\tilde{\sigma}\nrm{\mathbf{y}}^{q-2}} y_i^{p+\nu-1} \leq y_{i+1}$ where $c_{p,\nu}$ is a constant depending on $p$, $\nu$.
    \item [(iii)] $\forall i \leq \tilde{T} - t_1$,
        $y_{t_1+i} \geq \left(\frac{1}{c_{p,\nu}}\right)^{\frac{({p+\nu-1})(({p+\nu-1})^i-1)}{p+\nu-2}} \left(\tilde{\sigma}\nrm{\mathbf{y}}^{q-2}\right)^\frac{1}{p+\nu-2} ({p+\nu-1})^{-({p+\nu-1})^i}.$
\end{itemize}
\end{restatable}
Next, we derive the bound on the norm $\nrm{\mathbf{x}_{T} - \mathbf{x}^\ast}^q$ from the coordinate-wise properties in Lemma \ref{lem:lem-3} with the basis defined for $f$. When constructing the function, we choose $H \geq 2^{p+\nu+1} (p+\nu-1)! \sigma$ so that $\tilde{\sigma} \leq 1$. Then for basis vector $\mathbf{v}_{t_1+i}$, by Lemma \ref{lem:norm-equal} and Lemma \ref{lem:lem-3} (iii),
\begin{align*}
    &\langle \mathbf{v}_{t_1+i}, \mathbf{x}^\ast \rangle = y_{t_1+i}
    \geq \left(\frac{1}{c_{p,\nu}}\right)^{\frac{({p+\nu-1})(({p+\nu-1})^i-1)}{p+\nu-2}} \cdot \tilde{\sigma}^\frac{1}{p+\nu-q} \nrm{\mathbf{y}}^\frac{q-2}{p+\nu-2} \cdot ({p+\nu-1})^{-({p+\nu-1})^i} \\
    &= \left(\frac{1}{c_{p,\nu}}\right)^{\frac{({p+\nu-1})(({p+\nu-1})^i-1)}{p+\nu-2}} \cdot \left(\frac{2^{p+\nu+1} (p+\nu-1)! \sigma}{H}\right)^\frac{1}{p+\nu-q} \nrm{\mathbf{x}_0 - \mathbf{x}^\ast}^\frac{q-2}{p+\nu-2} \cdot ({p+\nu-1})^{-({p+\nu-1})^i},
\end{align*}
for $\mathbf{x}_0 = 0$, in which the first inequality follows from Lemma \ref{lem:lem-3} (iii), and then the fact that for $q \geq 2$ and $\tilde{\sigma} \leq 1$, $\tilde{\sigma}^\frac{1}{p+\nu-q} \leq \tilde{\sigma}^\frac{1}{p+\nu-2}$. For $t_1 \leq \frac{\tilde{T}}{2}$,
\begin{align*}
    \nrm{\mathbf{x}_{T} - \mathbf{x}^\ast}^q &= (\nrm{\mathbf{x}_{T} - \mathbf{x}^\ast}^2)^\frac{q}{2} \geq \left(\sum_{i=1}^{\tilde{T}}\left( \langle \mathbf{v}_i, \mathbf{x}_{T} - \mathbf{x}^\ast\rangle \right)^2 \right)^\frac{q}{2} \geq \left(\left( \langle \mathbf{v}_{t_1+T}, \mathbf{x}_{T} - \mathbf{x}^\ast\rangle \right)^2 \right)^\frac{q}{2} \\
    &= \left(\left( \langle \mathbf{v}_{t_1+T}, \mathbf{x}^\ast\rangle \right)^2 \right)^\frac{q}{2} = \left( \langle \mathbf{v}_{t_1+T}, \mathbf{x}^\ast\rangle \right)^q
\end{align*}
where the equality in the second line follows from the fact that by construction, $\mathbf{v}_{t_1+T}$ and $\mathbf{x}_{T}$ are orthogonal. 

Finally, with uniform convexity, we have
\begin{align*}
    &f(\mathbf{x}_T) - f(\mathbf{x}^\ast) \geq \frac{\sigma}{q} \nrm{\mathbf{x}_{T} - \mathbf{x}^\ast}^q \\
    &\geq \left(\frac{1}{c_{p,\nu}}\right)^{\frac{q(p+\nu-1)(({p+\nu-1})^T-1)}{p+\nu-2}} \cdot \frac{\sigma}{q  }\left(\frac{2^{p+\nu+1} (p+\nu-1)! \sigma}{H}\right)^\frac{q}{p+\nu-q}\nrm{\mathbf{x}_0 - \mathbf{x}^\ast}^\frac{q(q-2)}{p+\nu-2} \cdot ({p+\nu-1})^{-q({p+\nu-1})^T} \\
    &= c_{p,q,\nu} \cdot \frac{\sigma^\frac{p+\nu}{p+\nu-q}}{L_p^{\frac{q}{p+\nu-q}}} \cdot ({p+\nu-1})^{-q({p+\nu-1})^T}
\end{align*}
for $c_{p,q,\nu} = \frac{2^\frac{(p+\nu+1)q}{p+\nu-q}((p+\nu-1)!)^\frac{q}{p+\nu-q}}{q}\left(\frac{1}{c_{p,\nu}}\right)^{\frac{q(p+\nu-1)(({p+\nu-1})^T-1)}{p+\nu-2}} D^\frac{q(q-2)}{p+\nu-2}$ in which $\nrm{\mathbf{x}_0 - \mathbf{x}^\ast} \leq D$.
In order to achieve $f(\mathbf{x}_T) - f(\mathbf{x}^\ast) \leq \epsilon$, we have $c_{p,q,\nu} \cdot \frac{\sigma^\frac{p+\nu}{p+\nu-q}}{L_p^{\frac{q}{p+\nu-q}}} \cdot ({p+\nu-1})^{-q({p+\nu-1})^T} \leq \epsilon$,
from which we solve for
$T \geq \log\log_{p+\nu-1}\left(c_{p,q,\nu} \frac{\sigma^\frac{p+q-1}{p-1}}{L_p^{\frac{q}{p-1}}}\cdot \frac{1}{\epsilon}\right) + \log_{p+\nu-1}\left(\frac{1}{q}\right)$, which completes the proof for Theorem \ref{thm:loglog-rate} combined with the result in Section \ref{sec:kappa_rate}.
\end{proof}

\section{Conclusion and Future Work}

We provide tight lower bounds for minimizing functions with asymmetric high-order Hölder smoothness and uniform convexity. Specifically, we show that the oracle complexity is lower bounded by $\Omega\left( \left( \frac{H}{\sigma}\right)^\frac{2}{3(p+\nu)-2}\left( \frac{\sigma}{\epsilon}\right)^\frac{2(q-p-\nu)}{q(3(p+\nu)-2)}\right)$ for the $q > p + \nu$ case with the construction of a $\ell_\infty$-ball-truncated-Gaussian smoothed hard function, and $\Omega\left(\left(\frac{H}{\sigma}\right)^\frac{2}{3(p+\nu)-2}+ \log\log\left(\left(\frac{\sigma^{p+\nu}}{H^q}\right)^\frac{1}{p+\nu-q}\frac{1}{\epsilon}\right)\right)$ for the $q < p+\nu$ case. Both lower bounds match the corresponding upper bounds in the general setting. 

We note that the lower bounds for the $q > p+\nu$ case and the $q < p+\nu$ case are derived based on two different frameworks. The first lower bound based on Nemirovski's max function can be directly extended to hold for randomized algorithms based on ``robust-zero-chain'' arguments by \citep{carmon2020lower, carmon2021lower}. The second lower bound based on Nesterov's function, which is not a robust zero-chain, holds only for deterministic/zero-respecting algorithms.

We further note that the lower bound for the $q = p+\nu$ case is not included in this paper. Proposing a unified framework for all three cases as well as generalizing the results to work for randomized algorithms would be of great interest, which we leave for future work.

\bibliography{reference}
\bibliographystyle{iclr2025_conference}

\newpage
\appendix
{\Large{\textbf{Appendices}}}

\section{Proof for Technical Lemmas in Section \ref{sec:case1}} 
\subsection{Properties of Truncated Gaussian Smoothing} \label{app:TGS}
\begin{lemma} [$\ell_\infty$-Ball Truncated Gaussian and Its Marginal Distribution] \label{lem:truncated-gaussian} For standard MVN truncated in a unit $\ell_\infty$-ball as defined in Definition \ref{def:TGS}:
\begin{align*}
        \mathbb{P}[V=\mathbf{v}] = \frac{1}{Z (2\pi)^\frac{d}{2}} \exp \left\{ - \frac{\mathbf{v}^\top \mathbf{v}}{2}\right\} \mathbb{I}_{[\nrm{\mathbf{v}}_\infty \leq 1]},
\end{align*}
\begin{itemize}
    \item [(i)] The cumulative distribution within the $\ell_\infty$-ball, i.e., the normalizing factor $Z(d) = \bracks{\Phi(1) - \Phi(-1)}^d$. 
    \item [(ii)] The marginal distribution is a standard normal truncated within $[-1,1]$.
\end{itemize}
\begin{proof}
    (i) By Eq. (3) in \citep{cartinhour1990one}, we know that
    \begin{align*}
        Z(d) &= \int_{\nrm{\mathbf{v}}_\infty \leq 1} \frac{1}{(2\pi)^\frac{d}{2}} \exp \left\{ - \frac{\mathbf{v}^\top \mathbf{v}}{2}\right\} d\mathbf{v} \\
        &= \underbrace{\int_{-1}^{1} \cdots \int_{-1}^{1}}_{\text{$d$-time integration, one for each coordinate}} \frac{1}{(2\pi)^\frac{d}{2}} \exp \left\{ - \frac{\sum_{i=1}^d v_i^2}{2}\right\} dv_1 \cdots dv_d \\
        &= \prod_{i=1}^d \int_{-1}^1 \frac{1}{\sqrt{2\pi}} \exp \left\{ - \frac{v_i^2}{2}\right\} dv_i \\
        &= \bracks{\Phi(1) - \Phi(-1)}^d.
    \end{align*} 

    (ii) By Eq. (4) and (16) in \citep{cartinhour1990one}, $\forall i \in [d]$, 
    \begin{align*}
        \mathbb{P}\bracks{V_i} &= \frac{\exp \left\{ - \frac{v_i^2}{2}\right\}}{\bracks{\Phi(1) - \Phi(-1)}^d\sqrt{2\pi}} \underbrace{\int_{-1}^{1} \cdots \int_{-1}^{1}}_{\text{$d-1$-time integration}} \frac{\exp \left\{ - \frac{\sum_{j\neq i} v_j^2}{2}\right\}}{(2\pi)^\frac{d-1}{2}}  dv_1 \cdots dv_{i-1} dv_{i+1} \cdots dv_d \\
        &= \frac{\exp \left\{ - \frac{v_i^2}{2}\right\}}{\bracks{\Phi(1) - \Phi(-1)}^d\sqrt{2\pi}}  \prod_{j\neq i} \int_{-1}^1 \frac{1}{\sqrt{2\pi}} \exp \left\{ - \frac{v_j^2}{2}\right\} dv_j \\
        &= \frac{\exp \left\{ - \frac{v_i^2}{2}\right\}}{\bracks{\Phi(1) - \Phi(-1)}^d\sqrt{2\pi}}  \bracks{\Phi(1) - \Phi(-1)}^{d-1} \\
        &= \frac{1}{\bracks{\Phi(1) - \Phi(-1)}\sqrt{2\pi}} \exp \left\{ - \frac{v_i^2}{2}\right\} \\
        &= \frac{1}{\sqrt{2\pi}\int_{-1}^1 \frac{1}{\sqrt{2\pi}} \exp \left\{ - \frac{v_i^2}{2}\right\} dv_i} \exp \left\{ - \frac{v_i^2}{2}\right\},
    \end{align*}
    if $-1 \leq V_i \leq 1$, otherwise $\mathbb{P}\bracks{V_i} = 0$.
    Therefore, $V_i$ follows the truncated standard normal distribution within $[-1,1]$.
\end{proof}
\end{lemma}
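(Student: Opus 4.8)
The plan is to reduce everything to one structural observation, which is exactly why the $\ell_\infty$ ball rather than the $\ell_2$ ball is used: both the Gaussian weight and the truncation indicator factor across coordinates, $\exp\{-\mathbf{v}^\top\mathbf{v}/2\} = \prod_{i=1}^d \exp\{-v_i^2/2\}$ and $\mathbb{I}_{[\nrm{\mathbf{v}}_\infty \leq 1]} = \prod_{i=1}^d \mathbb{I}_{[\abs{v_i}\leq 1]}$, so the unnormalized density of $V$ is a product of $d$ identical one-dimensional unnormalized truncated-Gaussian densities. Every integral below then splits into a product of one-dimensional integrals by Tonelli's theorem, which applies since the integrand is nonnegative.

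For (i), I would start from $Z(d) = \int_{\R^d} \frac{1}{(2\pi)^{d/2}}\exp\{-\mathbf{v}^\top\mathbf{v}/2\}\,\mathbb{I}_{[\nrm{\mathbf{v}}_\infty\leq 1]}\,d\mathbf{v}$, observe that the support is the cube $[-1,1]^d$, and use the factorization to obtain $Z(d) = \prod_{i=1}^d \int_{-1}^1 \frac{1}{\sqrt{2\pi}}\exp\{-t^2/2\}\,dt = [\Phi(1)-\Phi(-1)]^d$, which is Eq. (3) of \citet{cartinhour1990one}. For (ii), I would fix a coordinate $i$ and integrate the joint density over the remaining $d-1$ coordinates; the same factorization makes those contribute $\prod_{j\neq i}\int_{-1}^1\frac{1}{\sqrt{2\pi}}\exp\{-v_j^2/2\}\,dv_j = [\Phi(1)-\Phi(-1)]^{d-1}$, cancelling all but one factor of the normalizer $Z(d)$ from part (i). What survives is the density $\frac{1}{[\Phi(1)-\Phi(-1)]\sqrt{2\pi}}\exp\{-v_i^2/2\}\,\mathbb{I}_{[\abs{v_i}\leq 1]}$, i.e., a standard normal truncated to $[-1,1]$; the same computation in fact shows the coordinates of $V$ are mutually independent, though only the marginal is needed downstream.

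I do not expect a genuine obstacle: the statement is a routine Fubini-type calculation. The only point worth flagging is conceptual — for the $\ell_2$ ball, $\mathbb{I}_{[\nrm{\mathbf{v}}_2\leq 1]}$ does not factor across coordinates, so both the normalizer and the marginal would acquire dimension-dependent constants, which is precisely the obstruction discussed before Definition \ref{def:TGS} that motivates the $\ell_\infty$ truncation. I would also record the trivial fact that $Z(d) = [\Phi(1)-\Phi(-1)]^d > 0$, so that conditioning on the cube is well defined and the marginal computation is legitimate.
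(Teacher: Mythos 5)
Your proposal is correct and follows the same route as the paper: exploit that the cube indicator and the Gaussian factor both split across coordinates, so the $d$-dimensional integral in (i) is a $d$-fold product of one-dimensional truncated-Gaussian integrals, and integrating out $d-1$ coordinates in (ii) cancels all but one factor of the normalizer. The paper carries out the identical computation (citing \citet{cartinhour1990one} for the factorization), though it does not explicitly invoke Tonelli or state the independence of coordinates, which you correctly note as a byproduct.
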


\begin{lemma} [Properties of Truncated Gaussian Smoothing] \label{lem:TGS_properties}
    For a function $f:\R^d \rightarrow \R$ that is $L$-Lipschitz with respect to the $\ell_2$ norm, then $\forall \ \mathbf{x} \in \R^d$, 
    \begin{itemize}
        \item [(i)] If $f$ is convex and non-differentiable in a set with Lebesgue measure $0$, then $f_\rho$ is continuously differentiable and $\nabla f_\rho(\mathbf{x}) = \E_V[\partial f(\mathbf{x}+\rho V)]$ for some random variable $V$.
        \item [(ii)] If $f$ is convex, $f_\rho$ is convex and $L$-Lipschitz with respect to the $\ell_2$ norm.
        \item [(iii)] If $f$ is convex, $f(\mathbf{x}) \leq f_\rho(\mathbf{x}) \leq f(\mathbf{x}) + \frac{5}{4} L \rho \sqrt{d}$.
        \item [(iv)] $\nabla f_\rho$ is $\frac{2}{\rho}L$-Lipschitz, i.e., $f_\rho$ is $\frac{2}{\rho}L$-smooth.
    \end{itemize}
\end{lemma}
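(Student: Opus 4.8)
The plan is to prove the four properties in sequence, establishing each from standard facts about expectations of Lipschitz convex functions plus the specific structure of the truncated Gaussian. Since $f_\rho(\mathbf{x}) = \E_V[f(\mathbf{x}+\rho V)]$ and $V$ is an integrable random variable supported on the unit $\ell_\infty$-ball, all expectations below are finite because $f$ is $L$-Lipschitz (hence grows at most linearly).

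For (i), I would use the dominated convergence theorem to differentiate under the expectation. Since $f$ is convex and nondifferentiable only on a Lebesgue-null set $N$, and $V$ has a density (absolutely continuous w.r.t.\ Lebesgue measure), for each fixed $\mathbf{x}$ the event $\mathbf{x}+\rho V \in N$ has probability zero; so $f$ is differentiable at $\mathbf{x}+\rho V$ almost surely. The difference quotients $\frac{f(\mathbf{x}+ h\mathbf{u}+\rho V) - f(\mathbf{x}+\rho V)}{h}$ are bounded in absolute value by $L\nrm{\mathbf{u}}$ uniformly in $h$ by Lipschitzness, so dominated convergence gives $\nabla f_\rho(\mathbf{x}) = \E_V[\nabla f(\mathbf{x}+\rho V)]$, interpreting $\nabla f$ as any measurable selection of $\partial f$ on $N$; continuity of $\nabla f_\rho$ follows from a second application of dominated convergence. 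For (ii), convexity of $f_\rho$ is immediate since it is an expectation (hence a mixture/average) of the convex functions $\mathbf{x}\mapsto f(\mathbf{x}+\rho \mathbf{v})$; the $L$-Lipschitz bound follows from $\abs{f_\rho(\mathbf{x}) - f_\rho(\mathbf{y})} \leq \E_V[\abs{f(\mathbf{x}+\rho V) - f(\mathbf{y}+\rho V)}] \leq \E_V[L\nrm{\mathbf{x}-\mathbf{y}}] = L\nrm{\mathbf{x}-\mathbf{y}}$.

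For (iii), the lower bound $f(\mathbf{x}) \leq f_\rho(\mathbf{x})$ is Jensen's inequality together with $\E_V[V] = \mathbf{0}$ (by symmetry of the truncated Gaussian about the origin). The upper bound comes from Lipschitzness: $f_\rho(\mathbf{x}) - f(\mathbf{x}) \leq \E_V[L\nrm{\rho V}] = L\rho\,\E_V[\nrm{V}_2] \leq L\rho\sqrt{d}\cdot\E[\abs{V_1}]$ by Cauchy-Schwarz (or rather $\E\nrm{V}_2 \leq \sqrt{\E\nrm{V}_2^2} = \sqrt{d\,\E[V_1^2]}$ using that the marginals are i.i.d.\ by Lemma \ref{lem:truncated-gaussian}(ii)); then one bounds $\E[V_1^2] \leq 1$ crudely, or more carefully to extract the constant $\tfrac{5}{4}$ — actually $\E[V_1^2]$ for a standard normal truncated to $[-1,1]$ is some explicit number less than $1$, and I would simply verify $\sqrt{\E[V_1^2]} \leq \tfrac{5}{4}$ numerically (it is in fact around $0.29$, so the constant is quite loose and any clean bound suffices). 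The key point is only that the marginal is the \emph{standard} truncated normal, with no $d$-dependent rescaling — this is exactly what Lemma \ref{lem:truncated-gaussian}(ii) buys us, and it is why the $\ell_\infty$ truncation matters.

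The main work is (iv), the dimension-free smoothness bound. Here I would write, using (i), $\nabla f_\rho(\mathbf{x}) = \E_V[\nabla f(\mathbf{x}+\rho V)]$ and then perform a change of variables to move the $\mathbf{x}$-dependence from the argument of $\nabla f$ into the density: $\nabla f_\rho(\mathbf{x}) = \int \nabla f(\mathbf{y})\, p_\rho(\mathbf{y}-\mathbf{x})\, d\mathbf{y}$ where $p_\rho$ is the density of $\rho V$. Integrating by parts (or directly differentiating the density form) gives $\nabla f_\rho(\mathbf{x}) = -\int f(\mathbf{y})\,\nabla p_\rho(\mathbf{y}-\mathbf{x})\,d\mathbf{y}$ up to handling the boundary of the $\ell_\infty$-cube support; then $\nabla f_\rho(\mathbf{x}) - \nabla f_\rho(\mathbf{x}') = -\int f(\mathbf{y})\big(\nabla p_\rho(\mathbf{y}-\mathbf{x}) - \nabla p_\rho(\mathbf{y}-\mathbf{x}')\big)d\mathbf{y}$, and since we may subtract any constant from $f$, the operator norm is bounded by $L \cdot \operatorname{diam} \cdot \tfrac{1}{2}\int \nrm{\nabla p_\rho(\mathbf{y}) - \nabla p_\rho(\mathbf{y}-(\mathbf{x}'-\mathbf{x}))}\,d\mathbf{y}$, which reduces to controlling the total-variation-type quantity $\int \abs{\partial_i p_\rho}$, i.e.\ essentially $\frac{1}{\rho}\E[\abs{V_i}] + (\text{boundary terms from the truncation})$; by the product structure this is again dimension-free. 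The cleanest route, and the one I would actually take, avoids integration by parts entirely: use the Lipschitz estimate $\nrm{\nabla f_\rho(\mathbf{x}) - \nabla f_\rho(\mathbf{x}')} \leq L \cdot \nrm{p_\rho(\cdot - \mathbf{x}) - p_\rho(\cdot-\mathbf{x}')}_{TV}$ (valid for expectations of gradients of $L$-Lipschitz functions, after centering $f$), and then bound the total variation distance between two translates of the truncated-Gaussian density. By the tensorization/coupling of total variation over the i.i.d.\ coordinates, this reduces to the one-dimensional truncated normal, where $\nrm{p_\rho^{(1)}(\cdot-a) - p_\rho^{(1)}(\cdot)}_{TV} \leq \frac{c\abs{a}}{\rho}$ for an explicit constant, and chasing the constant through gives the claimed $\frac{2}{\rho}L$. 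I expect the boundary terms created by the hard truncation — the density of the truncated normal has jumps at $\pm\rho$ — to be the fiddly part: one must check that these jump contributions, which are proportional to the density value at the endpoints, are absorbed into the stated constant $2$ rather than blowing it up, and this is where the loose-looking constant in the statement presumably comes from.
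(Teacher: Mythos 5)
Parts (i)--(iii) of your proposal are essentially the paper's argument. For (i) the paper simply cites \citet{bertsekas1973stochastic} rather than redoing the dominated-convergence argument, and for (iii) the paper bounds $\E[V_i^2]$ more loosely than you would, via $\E[V_i^2]\le \E[U_i^2]/(\Phi(1)-\Phi(-1))\approx 1.46$ with $U_i$ the untruncated standard normal, rather than the exact truncated second moment $\approx 0.29$ you mention; both give $\le\tfrac{5}{4}$ after the square root, so this is cosmetic. Your observation that the $\ell_\infty$ truncation is exactly what guarantees the marginal is the \emph{standard} truncated normal with no $d$-dependent rescaling is the right takeaway.

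Part (iv) is where your route has a genuine gap. You propose to bound $\nrm{p_\rho(\cdot-\mathbf{x})-p_\rho(\cdot-\mathbf{x}')}_{\mathrm{TV}}$ by tensorizing/coupling over the i.i.d.\ coordinates. This yields
$\mathrm{TV}\le\sum_{i=1}^d \mathrm{TV}_1\bigl(x_i,x_i'\bigr)\lesssim \tfrac{1}{\rho}\nrm{\mathbf{x}-\mathbf{x}'}_1\le \tfrac{\sqrt{d}}{\rho}\nrm{\mathbf{x}-\mathbf{x}'}_2$,
and the $\sqrt{d}$ you pick up in the $\ell_1\!\to\!\ell_2$ conversion is precisely the dimension-dependent penalty the whole truncated-Gaussian construction exists to avoid (it would degrade the lower bound back to the sub-optimal rate, cf.\ the discussion of \citet{agarwal2018lower} in the paper). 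Even for the untruncated isotropic Gaussian, where the exact answer is $\mathrm{TV}(\mathcal N(\mathbf{0},I),\mathcal N(\mathbf{a},I))\approx\nrm{\mathbf{a}}_2/\sqrt{2\pi}$, the coordinate-wise coupling bound is slack by this same $\sqrt d$ factor, so the loss is intrinsic to tensorization, not to the truncation. The paper's proof never decomposes coordinate-wise: it uses that the truncated density is radially decreasing in $\ell_2$ on its support, so the $L^1$ difference of the two translates equals (twice) the probability that the scalar projection $W=\inner{Z}{\tfrac{\mathbf{x}'-\mathbf{x}}{\nrm{\mathbf{x}'-\mathbf{x}}_2}}$ falls in a slab of width $\nrm{\mathbf{x}'-\mathbf{x}}_2$; it then bounds the peak density of $W$ by a dimension-free $O(1/\rho)$ via a CLT-type argument, arguing the worst direction is the diagonal where $W$ is a normalized sum of i.i.d.\ truncated univariate normals with variance $\Theta(\rho^2)$. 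Your route would need an analogous \emph{direction-dependent} bound on the one-dimensional projection of the joint law, not a per-coordinate one. (Your worry that jumps of the truncated density at the boundary inflate the constant is not where the difficulty lies; the $L^1$ difference handles those discontinuities without issue, and the quantitative bottleneck is the peak density of the projection.)
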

\begin{proof}
    The proof of this lemma is based on that of Lemma 9 in \citep{duchi2012randomized}. 
    
    (i) The differentiability is established in \citep[Proposition 2.3]{bertsekas1973stochastic}, and $\nabla f_\rho(\mathbf{x}) = \E_V[\partial f(\mathbf{x}+\rho V)]$ in \citep[Proposition 2.2]{bertsekas1973stochastic}. 

    (ii) Expectation preserves convexity \citep[Section 3.2.1]{boyd2004convex}, therefore, given that $f$ is convex, by definition, $f_\rho$ is also convex. For Lipschitz continuity, by the second part of (i) and Jensen's inequality, we have
    \begin{align*}
        \nrm{\nabla f_\rho(\mathbf{x})} &= \nrm{\E_V[\partial f(\mathbf{x}+\rho V)]} \\
        &\leq \E_V[\nrm{\partial f(\mathbf{x}+\rho V)} ].
    \end{align*} 
    Given that $f$ is $L$-Lipschitz over $\R^d$ with respect to the $\ell_2$ norm, it is implied that $\forall \mathbf{x} \in \R^d$, $\nrm{\partial f(\mathbf{x})} \leq L$. As a result, 
    $\nrm{\nabla f_\rho(\mathbf{x})} \leq \E[L] \leq L$ which further implies that $f_\rho$ is $L$-Lipschitz with respect to the $\ell_2$ norm.
    
    (iii) For $f_\rho(\mathbf{x}) = \E_{V}[f(\mathbf{x}+\rho V)]$, $\E_{V}[V] = 0$ by construction. And since smoothing preserves convexity, $f_\rho(\mathbf{x})$ is also convex. For the lower bound, using Jensen's inequality,
    \begin{align*}
        f(\mathbf{x}) &= f(\mathbf{x}+\rho\E_{V}[V]) \\
        &=f(\E_{V}[\mathbf{x}+\rho V]) \\
        &\leq \E_{V}[f(\mathbf{x}+\rho V)] \\
        &=f_\rho(\mathbf{x}).
    \end{align*}
    For the upper bound, since $f$ is $L$-Lipschitz in $\ell_2$-norm, $f(\mathbf{x}+\rho V) - f(\mathbf{x}) \leq L \nrm{\rho V}$. Therefore,
    \begin{align*}
        f_\rho(\mathbf{x}) &= \E_{V}[f(\mathbf{x}+\rho V)] \\
        &\leq \E_{V}[f(\mathbf{x})+L \rho \nrm{V}] \\
        &= f(\mathbf{x}) + L \rho \E\left[\sqrt{\sum_{i=1}^d V_i^2}\right] \\
        &\leq f(\mathbf{x}) + L \rho \sqrt{\sum_{i=1}^d \E\left[V_i^2\right]}.
    \end{align*}
    By Lemma \ref{lem:truncated-gaussian} (ii), $V_i$ follows the standard normal distribution truncated within $[-1, 1]$. Therefore, let $\Phi(\cdot)$ denote the cumulative distribution function of standard normal distribution, then
    \begin{align*}
        \E\left[V_i^2\right] &= \int_{-1}^{1} \frac{\phi(\tau)}{\Phi(1) - \Phi(-1)}\tau^2 d\tau \\
        &= \frac{1}{\Phi(1) - \Phi(-1)} \int_{-1}^{1} \phi(\tau) \tau^2 d\tau \\
        &\leq \frac{1}{\Phi(1) - \Phi(-1)} \int_{-\infty}^{\infty} \phi(\tau) \tau^2 d\tau \\
        &= \frac{\E\left[U_i^2\right]}{\Phi(1) - \Phi(-1)}
    \end{align*}
    for $U_i \sim \mc{N}(0,1)$, $\forall i \in [d]$.
    Then for $U = [U_1, \cdots, U_d]^\top$, $U$ follows the standard MVN distribution and
    \begin{align*}
        f_\rho(\mathbf{x}) &\leq f(\mathbf{x}) + L \rho \sqrt{\frac{ \E\left[\sum_{i=1}^d U_i^2\right]}{\Phi(1) - \Phi(-1)}} \\
        &= f(\mathbf{x}) + L \rho \sqrt{\frac{ \E\left[\nrm{U}^2\right]}{\Phi(1) - \Phi(-1)}}.
    \end{align*}
    $\E\left[\nrm{U}^2\right]$ is the second moment of the standard MVN, which is bounded by the dimension $d$ \citep[Lemma 1]{nesterov2017random}. We know that $\Phi(1) - \Phi(-1) \approx 0.6827$. As a result, we have
    \begin{align*}
        f_\rho(\mathbf{x}) &\leq f(\mathbf{x}) + \frac{5}{4} L \rho \sqrt{d}.
    \end{align*}

    (iv) The proof of this lemma follows that of Lemma 3.3 point 3 in \citep{lakshmanan2008decentralized}, also seen in that of Lemma 9 (iii) in \citep{duchi2012randomized}. Denote the PDF of the unit $\ell_\infty$-ball-truncated standard MVN as $\phi_{\nrm{\cdot}_\infty \leq 1}(\cdot; 0, 1)$. Then for $f_\rho(\mathbf{x}) = \E_{V}[f(\mathbf{x}+\rho V)]$, $\rho V$ has PDF $\phi_{\nrm{\cdot}_\infty \leq \rho}(\cdot; 0, \rho^2)$ by Lemma 2 (v) in \citep{chen2020note}. By 
    \citep[Lemma 11]{duchi2012randomized}, $\forall \ \mathbf{x}, \mathbf{x}' \in \R^d$, for $Z$ from $\phi_{\nrm{\cdot}_\infty \leq \rho}(\cdot; 0, \rho^2)$,
    \begin{align*}
        \nrm{\nabla f_\rho(\mathbf{x}) - \nabla f_\rho(\mathbf{x}')}_2 \leq L \underbrace{\int \left | \phi_{\nrm{\cdot}_\infty \leq \rho}(\mathbf{\mathbf{z}-\mathbf{x}}; 0, \rho^2) - \phi_{\nrm{\cdot}_\infty \leq \rho}(\mathbf{\mathbf{z}-\mathbf{x}'}; 0, \rho^2)  \right | d\mathbf{z}}_{I}.
    \end{align*}
    Now we bound the integral. Note that $\forall \ \mathbf{x}$, $\phi_{\nrm{\cdot}_\infty \leq \rho}(\mathbf{x}; 0, \rho^2)$ is a truncated MVN symmetrically centered at the origin, consequently, is strictly decreasing with respect to $\nrm{\mathbf{x}}_2$\footnote{Importantly, the PDF is strictly decreasing with respect to $\nrm{\mathbf{\cdot}}_2$, not $\nrm{\mathbf{\cdot}}_\infty$, no matter in which norm the truncation is done, as long as centered at the origin.}. As a result, $\phi_{\nrm{\cdot}_\infty \leq \rho}(\mathbf{z}-\mathbf{x}; 0, \rho^2) \geq \phi_{\nrm{\cdot}_\infty \leq \rho}(\mathbf{z}-\mathbf{x}'; 0, \rho^2)$ if and only if $\nrm{\mathbf{z}-\mathbf{x}}_2 \leq \nrm{\mathbf{z}-\mathbf{x}'}_2$. Therefore,
    \begin{align*}
        I &= 2 \int_{\nrm{\mathbf{z}-\mathbf{x}}_2 \leq \nrm{\mathbf{z}-\mathbf{x}'}_2} \left( \phi_{\nrm{\cdot}_\infty \leq \rho}(\mathbf{z}-\mathbf{x}; 0, \rho^2) - \phi_{\nrm{\cdot}_\infty \leq \rho}(\mathbf{z}-\mathbf{x}'; 0, \rho^2) \right)  d\mathbf{z} \\
        &= 2 \int_{\nrm{\mathbf{z}-\mathbf{x}}_2 \leq \nrm{\mathbf{z}-\mathbf{x}'}_2} \phi_{\nrm{\cdot}_\infty \leq \rho}(\mathbf{z}-\mathbf{x}; 0, \rho^2) d\mathbf{z}  - 2 \int_{\nrm{\mathbf{z}-\mathbf{x}}_2 \leq \nrm{\mathbf{z}-\mathbf{x}'}_2} \phi_{\nrm{\cdot}_\infty \leq \rho}(\mathbf{z}-\mathbf{x}'; 0, \rho^2)  d\mathbf{z}.
    \end{align*}
    Denote $\mathbf{y} = \mathbf{z}-\mathbf{x}$ and $\mathbf{y}' = \mathbf{z}-\mathbf{x}'$, then
    \begin{align*}
        I &= 2 \int_{\nrm{\mathbf{y}}_2 \leq \nrm{\mathbf{y}-(\mathbf{x}'-\mathbf{x})}_2} \phi_{\nrm{\cdot}_\infty \leq \rho}(\mathbf{y}; 0, \rho^2) d\mathbf{y} - 2 \int_{\nrm{\mathbf{y}'}_2 \geq \nrm{\mathbf{y}'-(\mathbf{x}-\mathbf{x}')}_2}\phi_{\nrm{\cdot}_\infty \leq \rho}(\mathbf{y}'; 0, \rho^2) d\mathbf{y}' \\
        &= 2\mathbb{P}_{\phi_{\nrm{\cdot}_\infty \leq \rho}} \left [ \nrm{Z}_2 \leq \nrm{Z-(\mathbf{x}'-\mathbf{x})}_2 \right] - 2\mathbb{P}_{\phi_{\nrm{\cdot}_\infty \leq \rho}} \left [ \nrm{Z'}_2 \geq \nrm{Z'-(\mathbf{x}-\mathbf{x}')}_2 \right] \\
        &= 2\mathbb{P}_{\phi_{\nrm{\cdot}_\infty \leq \rho}} \left [ \nrm{Z}_2^2 \leq \nrm{Z-(\mathbf{x}'-\mathbf{x})}_2^2 \right] - 2\mathbb{P}_{\phi_{\nrm{\cdot}_\infty \leq \rho}} \left [ \nrm{Z'}_2^2 \geq \nrm{Z'-(\mathbf{x}-\mathbf{x}')}_2^2 \right] \\
        &=2\mathbb{P}_{\phi_{\nrm{\cdot}_\infty \leq \rho}} \left [ 2\inner{Z}{\mathbf{x}'-\mathbf{x}} \leq \nrm{\mathbf{x}'-\mathbf{x}}_2^2 \right] - 2\mathbb{P}_{\phi_{\nrm{\cdot}_\infty \leq \rho}} \left [ 2\inner{Z'}{\mathbf{x}-\mathbf{x}'} \geq \nrm{\mathbf{x}-\mathbf{x}'}_2^2 \right] \\
        &=2\mathbb{P}_{\phi_{\nrm{\cdot}_\infty \leq \rho}} \left [ \inner{Z}{\frac{\mathbf{x}'-\mathbf{x}}{\nrm{\mathbf{x}'-\mathbf{x}}_2}} \leq \frac{\nrm{\mathbf{x}'-\mathbf{x}}_2}{2} \right] - 2\mathbb{P}_{\phi_{\nrm{\cdot}_\infty \leq \rho}} \left [ \inner{Z'}{\frac{\mathbf{x}-\mathbf{x}'}{\nrm{\mathbf{x}-\mathbf{x}'}_2}} \geq \frac{\nrm{\mathbf{x}-\mathbf{x}'}_2}{2} \right] 
    \end{align*}
    Denote $W = \inner{Z}{\frac{\mathbf{x}'-\mathbf{x}}{\nrm{\mathbf{x}'-\mathbf{x}}_2}}$ and $W' = \inner{Z'}{\frac{\mathbf{x}-\mathbf{x}'}{\nrm{\mathbf{x}-\mathbf{x}'}_2}}$. Since $\frac{\mathbf{x}'-\mathbf{x}}{\nrm{\mathbf{x}'-\mathbf{x}}_2}$ and $\frac{\mathbf{x}-\mathbf{x}'}{\nrm{\mathbf{x}-\mathbf{x}'}_2}$ are normalized vectors, $W$ and $W'$ follow the one-dimensional distribution projected onto a plane along some direction from the truncated multivariate Gaussian, which is symmetrically centered at the origin. Therefore, by symmetry,
     \begin{align*}
        I &= 2\mathbb{P} \left [ W \leq \frac{\nrm{\mathbf{x}'-\mathbf{x}}_2}{2} \right] - 2\mathbb{P} \left [ W' \geq \frac{\nrm{\mathbf{x}-\mathbf{x}'}_2}{2} \right] \\
        &= 2\mathbb{P} \left [ W \leq - \frac{\nrm{\mathbf{x}'-\mathbf{x}}_2}{2} \right] + 2\mathbb{P} \left [ - \frac{\nrm{\mathbf{x}'-\mathbf{x}}_2}{2} \leq W \leq \frac{\nrm{\mathbf{x}'-\mathbf{x}}_2}{2} \right] - 2\mathbb{P} \left [ W' \geq \frac{\nrm{\mathbf{x}-\mathbf{x}'}_2}{2} \right] \\
        &= 2\mathbb{P} \left [ - \frac{\nrm{\mathbf{x}'-\mathbf{x}}_2}{2} \leq W \leq \frac{\nrm{\mathbf{x}'-\mathbf{x}}_2}{2} \right]
    \end{align*}
    
     As we later upper bound the integration by the peak of this distribution, we know by the geometry of $\ell_\infty$-ball that the projection onto the diagonal yields the highest peak, and that is when $W = \frac{1}{\sqrt{d}} \sum_{i=1}^d Z_i$ for $Z_i$ being the marginal of $Z$ that follows the truncated Gaussian distribution on $[-\rho, \rho]$ by Lemma \ref{lem:truncated-gaussian} (ii). 
     And further by Lemma 2 (v) in \citep{chen2020note}, $\frac{Z_i}{\sqrt{d}}$ is also a truncated Gaussian whose PDF is $\phi_{[-\frac{\rho}{\sqrt{d}}, \frac{\rho}{\sqrt{d}}]}(w; 0, \frac{\rho^2}{d})$. 
    As a result, $W$ is the sum of independent identically distributed (i.i.d.) truncated Gaussian variables. By Theorem 3 in \citep{chen2020note} and E.q. (4.2) in \citep{birnbaum1949sums} we know the sum of truncated Gaussian variables converges to a normal distribution for large $d$. As a result, $W \sim \pars{\sum_{i=1}^d \mathrm{Var}\bracks{Z_i}}\mc{N}\pars{0,1}$. Knowing from the CDF of truncated Gaussian that  $\forall \ i \in [d]$, $\mathrm{Var}\bracks{Z_i} = \frac{\sigma^2}{d}\bracks{1-\frac{\phi(1)+\phi(-1)}{\Phi(1)-\Phi(-1)} - \pars{\frac{\phi(1)-\phi(-1)}{\Phi(1)-\Phi(-1)}}^2} = 0.7089\frac{\rho^2}{d}$, we have $\pars{\sum_{i=1}^d \mathrm{Var}\bracks{Z_i}} = 0.7089\rho^2$
    \begin{align*}
        \mathbb{P} \left [ - \frac{\nrm{\mathbf{x}'-\mathbf{x}}_2}{2} \leq W \leq \frac{\nrm{\mathbf{x}'-\mathbf{x}}_2}{2} \right] &= \frac{1}{\sqrt{2\pi}\sqrt{0.7089}\rho} \int_{-\frac{\nrm{\mathbf{x}'-\mathbf{x}}_2}{2}}^{\frac{\nrm{\mathbf{x}'-\mathbf{x}}_2}{2}} \exp\{-\frac{w^2}{2\times0.7089\rho^2}\} dw
    \end{align*}
    Furthermore, since the PDF takes its peak at $w=0$, we have
    \begin{align*}
        I &\leq 2 \times \frac{2}{\sqrt{2\pi} \rho} \int_{-\frac{\nrm{\mathbf{x}'-\mathbf{x}}_2}{2}}^{\frac{\nrm{\mathbf{x}'-\mathbf{x}}_2}{2}}  dw \\
        &= \frac{4\nrm{\mathbf{x}'-\mathbf{x}}_2}{\sqrt{2\pi} \rho}
    \end{align*}
    Therefore,
    \begin{align*}
        \nrm{\nabla f_\rho(\mathbf{x}) - \nabla f_\rho(\mathbf{x}')}_2 &\leq L I \\
        &\leq \frac{2L}{\rho} \nrm{\mathbf{x}'-\mathbf{x}}_2.
    \end{align*}
\end{proof}

\lemTGSRepeat*
\begin{proof} The proof of this lemma relies on inductively applying Lemma \ref{lem:TGS_properties} and we provide formal proof by induction. 

(i) The base case $p=1$ holds directly by Lemma \ref{lem:TGS_properties} (ii). Then we state the hypothesis that for $p=k$, $f_\rho^k$ is convex and $L$-Lipschitz with respect to the $\ell_2$ norm. For the induction step, we have, by definition, $f_\rho^{k+1} = S_\rho[f_\rho^k]$ where $f_\rho^k$ is convex and $L$-Lipschitz with respect to the $\ell_2$ norm by our hypothesis, with which $f_\rho^k$ satisfies the condition of Lemma \ref{lem:TGS_properties}. Then by Lemma \ref{lem:TGS_properties} (ii), $f_\rho^{k+1}$ is convex and $L$-Lipschitz with respect to the $\ell_2$ norm. 

(ii) The base case $p=1$ holds directly by Lemma \ref{lem:TGS_properties} (iii). Then we state the hypothesis that for $p=k$, $f(\mathbf{x}) \leq f_\rho^k(\mathbf{x}) \leq f(\mathbf{x}) + \frac{5k}{4}L\rho \sqrt{d}$ holds. From the result of (i), we know that $f_\rho^k$ satisfies the condition of Lemma \ref{lem:TGS_properties}. Therefore, applying \ref{lem:TGS_properties} (iii) to the function $f_\rho^k (\mathbf{x})$ we have for the lower bound $$f_\rho^{k+1} (\mathbf{x}) \geq f_\rho^k (\mathbf{x}) \geq f(\mathbf{x})$$ and for the lower bound $$f_\rho^{k+1} (\mathbf{x}) \leq f_\rho^k (\mathbf{x}) + \frac{5}{4}L\rho \sqrt{d} \leq f(\mathbf{x}) + \frac{5k}{4}L\rho \sqrt{d} + \frac{5}{4}L\rho \sqrt{d} = f(\mathbf{x}) + \frac{5(k+1)}{4}L\rho \sqrt{d}$$
which completes the induction step.

(iii) The base case $p=1$ holds for $i=0$ by Lemma \ref{lem:TGS_properties} (ii) and for $i=1$ by Lemma \ref{lem:TGS_properties} (iv). Now we state the inductive hypothesis that for $p=k$, it holds that $\forall \ \mathbf{x}, \mathbf{x}' \in \R$, $$\forall \ i \in [k], \ \nrm{\nabla^i f_\rho^k(\mathbf{x}) - \nabla^i f_\rho^k(\mathbf{x}')} \leq \left(\frac{2}{\rho}\right)^i L \nrm{\mathbf{x} - \mathbf{x}'}.$$ That is, $\forall \ i \in [k]$, the function $\nabla^i f_\rho^k$ is $\left(\left(\frac{2}{\rho}\right)^i L\right)$-Lipschitz.
Then for $p=k+1$, $\forall i \in [k+1]$, 
\begin{align*}
    \nrm{\nabla^i f_\rho^{k+1}(\mathbf{x}) - \nabla^i f_\rho^{k+1}(\mathbf{x}')} &= \nrm{\nabla^i S_\rho [f_\rho^{k}](\mathbf{x}) - \nabla^i S_\rho [f_\rho^{k}](\mathbf{x}')} \\
    &= \nrm{ S_\rho [ \nabla^i f_\rho^{k}](\mathbf{x}) - S_\rho [ \nabla^i f_\rho^{k}](\mathbf{x}')} \\
    &= \nrm{ \E_V [ \nabla^i f_\rho^{k}(\mathbf{x} + \rho V)] - \E_V [ \nabla^i f_\rho^{k}(\mathbf{x}' + \rho V)]} \\
    &= \nrm{ \E_V [ \nabla^i f_\rho^{k}(\mathbf{x} + \rho V) - \nabla^i f_\rho^{k}(\mathbf{x}' + \rho V)]} \\
    &\leq \E_V [ \nrm{\nabla^i f_\rho^{k}(\mathbf{x} + \rho V) - \nabla^i f_\rho^{k}(\mathbf{x}' + \rho V)}]
\end{align*}
where the first equality holds by definition, the second equality by the fact that expectation and derivative commute for differentiable functions, and the last inequality by the Jensen's.

For $i < k+1$, we can directly apply Lemma \ref{lem:TGS_properties} (iv), with the hypothesis as the condition, on the function $\nabla^i f_\rho^{k}$, to establish the result that $\nabla^i f_\rho^{k}$ is smooth with parameter $\left(\frac{2}{\rho}\right)^{i} L$. Therefore,
\begin{align*}
    \nrm{\nabla^i f_\rho^{k+1}(\mathbf{x}) - \nabla^i f_\rho^{k+1}(\mathbf{x}')} &\leq \E_V [ \nrm{\nabla^i f_\rho^{k}(\mathbf{x} + \rho V) - \nabla^i f_\rho^{k}(\mathbf{x}' + \rho V)}] \\
    &\leq \E_V \left[\left(\frac{2}{\rho}\right)^{i} L \nrm{\mathbf{x} - \mathbf{x}'}\right] \\
    &=\left(\frac{2}{\rho}\right)^{i} L \nrm{\mathbf{x} - \mathbf{x}'} 
\end{align*}

For $i = k+1$, we have from our $i < k+1$ case that the function $\nabla^k f_\rho^{k+1}$ is $\left(\left(\frac{2}{\rho}\right)^{k} L\right)$-Lipschitz. We can therefore apply Lemma \ref{lem:TGS_properties} (iv) on $\nabla^k f_\rho^{k+1}$ and claim that it's also smooth with parameter $\frac{2}{\rho} \cdot \left(\frac{2}{\rho}\right)^{k} L = \left(\frac{2}{\rho}\right)^{k+1} L$. That is,
\begin{align*}
    \norm{\nabla \left[\nabla^k f_\rho^{k+1}\right](\mathbf{x}) - \nabla\left[\nabla^k f_\rho^{k+1}\right](\mathbf{x}')} \leq \left(\frac{2}{\rho}\right)^{k+1} L \nrm{\mathbf{x} - \mathbf{x}'},
\end{align*}
which completes the proof.
\end{proof}

\subsection{Properties of the Constructed Hard Function}
\label{app:lemmas}
\lemglipschitz*
\begin{proof} (1) For convexity, by definition we have
    \begin{align*}
    g_t(\mathbf{x}) = \max_{1\leq k \leq t} r_k(\mathbf{x}) && where && \forall \ k \in [T], r_k(\mathbf{x}) =  \xi_k \inner{\mathbf{e}_{\alpha(k)}}{\mathbf{x}} - (k-1)\delta,
\end{align*}
Since $r_k(\mathbf{x})$ is linear in $\mathbf{x}$, $r_k(\mathbf{x})$ is convex. Then $g_t(\mathbf{x})$ is the maximum of convex functions which is also convex.

(2) To show Lipschitzness, $\forall \ \mathbf{x}, \mathbf{y} \in \R^d$, without the loss of generality, denote
\begin{align*}
    k_1 = \argmax_{1\leq k \leq t} r_k(\mathbf{x}) && k_2 = \argmax_{1\leq k \leq t} r_k(\mathbf{y}).
\end{align*}
Therefore,
\begin{align*}
    g_t\pars{\mathbf{x}} = \xi_{k_1} x_{\alpha\pars{k_1}} - (k_1-1)\delta && g_t\pars{\mathbf{y}} = \xi_{k_2} y_{\alpha\pars{k_2}} - (k_2-1)\delta.
\end{align*}
Since
\begin{align*}
    g_t\pars{\mathbf{y}} &= \xi_{k_2} y_{\alpha\pars{k_2}} - (k_2-1)\delta \\
    &= \max_{1\leq k \leq t} \xi_k \inner{\mathbf{e}_{\alpha(k)}}{\mathbf{x}} - (k-1)\delta \\
    &\geq \xi_{k_1} y_{\alpha\pars{k_1}} - (k_1-1)\delta,
\end{align*}
we have
\begin{align*}
    g_t\pars{\mathbf{x}} - g_t\pars{\mathbf{y}} &\leq \pars{\xi_{k_1} x_{\alpha\pars{k_1}} - (k_1-1)\delta} - \pars{\xi_{k_1} y_{\alpha\pars{k_1}} - (k_1-1)\delta} \\
    &\leq \abs{x_{\alpha\pars{k_1}} - y_{\alpha\pars{k_1}}} \\
    &\leq \max_{1 \leq i \leq d} \abs{x_i - y_i} \\
    &= \norm{\mathbf{x} - \mathbf{y}}_\infty \\
    &\leq \norm{\mathbf{x} - \mathbf{y}}_2,
\end{align*}
where the last two inequalities show Lipschitzness in $\ell_\infty$ and $\ell_2$ norm respectively.
\end{proof}

\lemGproperties*
\begin{proof}
    The proof follows directly from that for Lemma \ref{lem:TGS_Repeat}.
\end{proof}

\lemFproperties*
\begin{proof} 
(i) It is shown in \citep[Section 4.2.2]{nesterov2018lectures} that $\frac{\sigma}{q} \norm{\mathbf{x}}^q$ is uniformly convex with degree $q$ and parameter $\sigma$. By Lemma \ref{lem:G_properties} (i), $G_T$ is convex, therefore $f$ is also convex, so that $\forall \ \mathbf{x}, \mathbf{y} \in \R$, $\inner{\nabla f(\mathbf{x}) - \nabla f(\mathbf{y})}{\mathbf{x} - \mathbf{y}} \geq 0$. Therefore, by Definition \ref{def:uniform-convex}, $\inner{\nabla (\frac{\sigma}{q} \norm{\mathbf{x}}^q) - \nabla (\frac{\sigma}{q} \norm{\mathbf{y}}^q)}{\mathbf{x} - \mathbf{y}} \geq \sigma \nrm{\mathbf{x} - \mathbf{y}}^q$. Adding them together we get $\inner{\nabla F(\mathbf{x}) - \nabla F(\mathbf{y})}{\mathbf{x} - \mathbf{y}} \geq \sigma \nrm{\mathbf{x} - \mathbf{y}}^q$, which shows that $F(\mathbf{x})$ is uniformly convex function with degree $q$ and modulus $\sigma > 0$.

(ii) From Lemma \ref{lem:G_properties} (iii) and Definition \ref{def:high-order-smooth}, we know that $f$ is $p^{th}$-order smooth with parameter $L_p = \beta \left(\frac{2}{\rho}\right)^p$, $\forall \ p \in \mathbb{Z}^+$, i.e., $\forall \ \mathbf{x}, \mathbf{y} \in \mathcal{Q} \subset \R^d$,
   \begin{align*}
       \nrm{\nabla^p f(\mathbf{x}) - \nabla^p f(\mathbf{y})} \leq \beta\left(\frac{2}{\rho}\right)^p \nrm{\mathbf{x} - \mathbf{y}}.
   \end{align*}
   Also, $\nabla^{p-1} f$ is $\beta\left(\frac{2}{\rho}\right)^{p-1}$-Lipschitz, which implies that $\forall \ \mathbf{x} \in \R^d$, $\nrm{\nabla^p f(\mathbf{x})} \leq \beta\left(\frac{2}{\rho}\right)^{p-1}$.
   Then we have $\forall \ \mathbf{x}, \mathbf{y} \in \mathcal{Q}$,
   \begin{align*}
       \nrm{\nabla^p f(\mathbf{x}) - \nabla^p f(\mathbf{y})} &= \nrm{\nabla^p f(\mathbf{x}) - \nabla^p f(\mathbf{y})}^\nu \nrm{\nabla^p f(\mathbf{x}) - \nabla^p f(\mathbf{y})}^{1-\nu} \\
       &\leq \nrm{\nabla^p f(\mathbf{x}) - \nabla^p f(\mathbf{y})}^\nu \left(\nrm{\nabla^p f(\mathbf{x})} + \nrm{\nabla^p f(\mathbf{y})}\right)^{1-\nu} \\
       &\leq \left(\frac{2}{\rho}\right)^{p\nu}  \beta^\nu \nrm{\mathbf{x} - \mathbf{y}}^\nu \left(2 \beta \left(\frac{2}{\rho}\right)^{p-1}\right)^{1-\nu} \\
       &= \frac{2^{p} \beta}{{\rho}^{p+\nu-1}} \nrm{\mathbf{x} - \mathbf{y}}^\nu.
   \end{align*}
    By letting $H = \frac{2^{p+1} \beta}{{\rho}^{p+\nu-1}}$, we can conclude that $f$ is $p^{th}$-order Hölder smooth with parameter $\frac{H}{2}$.
    
    Furthermore, for $d_q(\mathbf{x})$, by definition,  $\mathcal{Q} = \{ \mathbf{x}: \Vert \mathbf{x} \Vert_2 \leq D \}$ for $D \leq \left(\frac{H}{2^{1-\nu}C} \right)^\frac{1}{q-p-\nu}$ and $C = \sigma(q-1)\times \cdots \times(q-p)$. As a result, $$\Vert\nabla^{p+1} d_q(\mathbf{x}) \Vert = \sigma (q-1)\times \cdots \times(q-p) \Vert \mathbf{x} \Vert^{q-p-1} \leq C \cdot D^{q-p-1}.$$ This indicates that $d_q(\mathbf{x})$ is $p^{th}$-order smooth with parameter $C \cdot D^{q-p-1}$, which is equivalent to $\forall \ \mathbf{x}, \mathbf{y} \in \mathcal{Q}$,
$$\Vert \nabla^p d_q(\mathbf{x}) - \nabla^p d_q(\mathbf{y}) \Vert \leq C \cdot D^{q-p-1} \Vert \mathbf{x} - \mathbf{y} \Vert.$$
Given that $\Vert \mathbf{x} - \mathbf{y} \Vert = \Vert \mathbf{x} - \mathbf{y} \Vert^{1-\nu} \Vert \mathbf{x} - \mathbf{y} \Vert^\nu \leq (\Vert \mathbf{x}\Vert + \Vert\mathbf{y} \Vert)^{1-\nu}\Vert \mathbf{x} - \mathbf{y} \Vert^\nu \leq (2D)^{1-\nu}\Vert \mathbf{x} - \mathbf{y} \Vert^\nu$, we have
$$\Vert \nabla^p d_q(\mathbf{x}) - \nabla^p d_q(\mathbf{y}) \Vert \leq 2^{1-\nu} C \cdot D^{q-p-\nu} \Vert \mathbf{x} - \mathbf{y} \Vert^\nu \leq \frac{H}{2} \Vert \mathbf{x} - \mathbf{y} \Vert^\nu.$$
That is, $d_q(\mathbf{x})$ is $p^{th}$-order Hölder smooth with parameter $\frac{H}{2}$ on domain $\mathcal{Q}$. Since $f$ is also $p^{th}$-order Hölder smooth with parameter $\frac{H}{2}$, we conclude that $F = f + d_q$ is $p^{th}$-order Hölder smooth with parameter $H$ on domain $\mathcal{Q}$.

\end{proof}

\lemFbound*
\begin{proof}
Since $F(\mathbf{x})$ is constructed with softmax smoothing, we are now able to characterize it with the properties in Lemma \ref{lem:G_properties}. $F(\mathbf{x})$ can be upper bounded using the second inequality of Lemma \ref{lem:G_properties} (ii):
\begin{align*}
    F(\mathbf{x}) &= \beta G_T(\mathbf{x}) + \frac{\sigma}{q}\nrm{\mathbf{x}}^q \\
    &\leq \beta g_T(\mathbf{x}) + \frac{5}{4} p \beta \rho \sqrt{d} + \frac{\sigma}{q}\nrm{\mathbf{x}}^q \\
    &= \beta \max_{k \in [T]} \left \{ \xi_k \inner{\mathbf{e}_{\alpha(k)}}{\mathbf{x}} - (k-1)\delta\right\} + \frac{5}{4} p \beta \rho \sqrt{d} + \frac{\sigma}{q}\nrm{\mathbf{x}}^q \\
    &\leq \beta \max_{k \in [T]} \xi_k \inner{\mathbf{e}_{\alpha(k)}}{\mathbf{x}} + \frac{5}{4} p \beta \rho \sqrt{d} + \frac{\sigma}{q}\nrm{\mathbf{x}}^q.
\end{align*}
$F(\mathbf{x})$ can be lower bounded using the first inequality of Lemma \ref{lem:G_properties} (ii):
\begin{align*}
    F(\mathbf{x}) &= \beta G_T(\mathbf{x}) + \frac{\sigma}{q}\nrm{x}^q \\
    &\geq \beta g_T(\mathbf{x}) + \frac{\sigma}{q}\nrm{\mathbf{x}}^q \\
    &= \beta \max_{k \in [T]} \left \{ \xi_k \inner{\mathbf{e}_{\alpha(k)}}{\mathbf{x}} - (k-1)\delta\right\} + \frac{\sigma}{q}\nrm{\mathbf{x}}^q \\
    &\geq \beta \max_{k \in [T]} \xi_k \inner{\mathbf{e}_{\alpha(k)}}{\mathbf{x}} - (T-1)\delta + \frac{\sigma}{q}\nrm{\mathbf{x}}^q.
\end{align*}
\end{proof}

\lemGapbound*
\begin{proof}
\begin{align*}
    F(\mathbf{x}^\ast) &= \min_\mathbf{x} F(\mathbf{x}) \\
    &\leq \min_\mathbf{x} R(\mathbf{x}) + \frac{5}{4} p \beta \rho \sqrt{d} \\
    &= \min_\mathbf{x} \left \{ \beta \max_{k \in [T]} \xi_k \inner{\mathbf{e}_{\alpha(k)}}{\mathbf{x}} + \frac{\sigma}{q}\nrm{\mathbf{x}}^q \right \} + \frac{5}{4} p \beta \rho \sqrt{d}.
\end{align*}
Define $\gamma = \left |\max_{k \in [T]} \xi_k \inner{\mathbf{e}_{\alpha(k)}}{\mathbf{x}}\right |$. Then by symmetry \citep{doikov2022lower},
\begin{align*}
    \nrm{\mathbf{x}}^q 
 &= T^\frac{q}{2} \gamma^q.
\end{align*}
As a result,
\begin{align*}
    \min_\mathbf{x} R(\mathbf{x}) &= \min_\mathbf{x} \left \{ \beta \max_{k \in [T]} \xi_k \inner{\mathbf{e}_{\alpha(k)}}{\mathbf{x}} + \frac{\sigma}{q}\nrm{\mathbf{x}}^q \right \} \\
    &= \min_{\gamma > 0}\{-\beta \gamma + \frac{\sigma}{q} T^\frac{q}{2}\gamma^q\} \\
    &= - \frac{q-1}{q}\left( \frac{\beta^q}{\sigma T^\frac{q}{2}}\right)^\frac{1}{q-1}.
\end{align*}
Therefore,
\begin{align*}
    F(\mathbf{x}^\ast) &\leq - \frac{q-1}{q}\left( \frac{\beta^q}{\sigma T^\frac{q}{2}}\right)^\frac{1}{q-1} + \frac{5}{4} p \beta \rho \sqrt{d}.
\end{align*}

Furthermore, for some $\mathbf{x}_T$ generated following some algorithm $\mc{A}$ along some trajectory, by definition,
\begin{align*}
    g_T(\mathbf{x}_T) &\geq \left | \inner{e_{\alpha(T)}}{\mathbf{x}_T}\right| - (T-1)\delta \\
    &\geq -(T-1)\delta.
\end{align*}
Therefore,
\begin{align*}
    F(\mathbf{x}_T) &= f(\mathbf{x}_T) + \frac{\sigma}{q}\nrm{\mathbf{x}_T}^q \\
    &\geq f(\mathbf{x}_T) \\
    &= \beta G_T(\mathbf{x}_T) \\
    &\geq \beta g_T(\mathbf{x}_T) & \text{(Lemma \ref{lem:G_properties} (ii))}\\
    &\geq - \beta(T-1)\delta.
\end{align*}
Given the upper bound on $F(\mathbf{x}^\ast)$, we have
\begin{align*}
    F(\mathbf{x}_T) - F(\mathbf{x}^\ast) &\geq - \beta(T-1)\delta - \frac{5}{4} p \beta \rho \sqrt{d}  + \frac{q-1}{q}\left( \frac{\beta^q}{\sigma T^\frac{q}{2}}\right)^\frac{1}{q-1}.
\end{align*}
\end{proof}

\section{Proof for Technical Lemmas in Section \ref{sec:case2}} \label{app:sec2}
\lemnormequal*
\begin{proof}
    (i) By definition, $f$ is a scaling and rotation of $\tilde{f}$. Since $\mathbf{v}_1, \cdots, \mathbf{v}_{\tilde{T}}$, we can write for $V = [\mathbf{v}_1, \cdots, \mathbf{v}_{\tilde{T}}]$, $f(\mathbf{x}) = \frac{H}{2^{p+\nu+1} (p+\nu-1)!}\tilde{f}(V\mathbf{x})$. Therefore, 
    \begin{align*}
        \mathbf{y} &= \argmin_{\mathbf{x}} \tilde{f}(\mathbf{x}) \\
        &= V\argmin_{\mathbf{x}} \tilde{f}(V\mathbf{x}) \\
        &=  V\argmin_{\mathbf{x}} f(\mathbf{x}) \\
        &= V \mathbf{x}^\ast.
    \end{align*}

    (ii) This can be shown in the same way as \citep[Lemma 6]{arjevani2019oracle}.
\end{proof}

\lemffproperties*
\begin{proof}
    (i) The proof is similar to that for Lemma \ref{lem:F_properties} (i).
    
    (ii) Without the loss of generality, let the basis that defines $f$ be the standard basis. $\forall \ i \in [\tilde{T}]$, denote $\mathbf{e}_i$ the $i^{th}$ vector in the standard basis. Denote function $g(x) = \frac{1}{p+\nu}|x|^{p+\nu}$. $g^{(p)}(x)$, the $p^{th}$-order derivative of $g(x)$ is $(p+\nu-1)! x^\nu$ if $p$ is odd, $(p+\nu-1)! |x|^\nu$ is even. Let $\mathbf{d}_i = \mathbf{e}_i - \mathbf{e}_{i+1}$, then
    \begin{align*}
        f(\mathbf{x}) = \frac{H}{2^{p+\nu+1} (p+\nu-1)!} \left(\frac{1}{p+\nu} \sum_{i=1}^{\tilde{T}} g(\inner{\mathbf{d}_i}{\mathbf{x}}) - \gamma x_1 \right) + \frac{\sigma}{q}\nrm{\mathbf{x}}^q
    \end{align*}
    Since $q<p+\nu$, then $q \leq p$. Therefore, $\forall \ \mathbf{x}, \mathbf{y} \in \R^d$,
    \begin{align*}
        \nrm{\nabla^p f(\mathbf{x}) - \nabla^p f(\mathbf{y})} &= \frac{H}{2^{p+\nu+1} (p+\nu-1)!} \norm{\sum_{i=1}^{\tilde{T}} \left[g^{(p)}(\inner{\mathbf{d}_i}{\mathbf{x}}) - g^{(p)}(\inner{\mathbf{d}_i}{\mathbf{y}})\right] [\mathbf{d}_i]^p} \\
        &\leq \frac{H(p+\nu-1)!}{2^{p+\nu+1} (p+\nu-1)!} \norm{\sum_{i=1}^{\tilde{T}} \left|\inner{\mathbf{d}_i}{\mathbf{x} - \mathbf{y}}\right|^\nu [\mathbf{d}_i]^p} \\
        &\leq \frac{H}{2^{p+\nu+1}} \sqrt{2}\nrm{\mathbf{x} - \mathbf{y}}^\nu \norm{\sum_{i=1}^{\tilde{T}}[\mathbf{d}_i]^p} \\
        &\leq  \frac{H}{2^{p+\nu+1}} \sqrt{2}\nrm{\mathbf{x} - \mathbf{y}}^\nu 2^p \\
        &\leq H \nrm{\mathbf{x} - \mathbf{y}}^\nu.
    \end{align*}
\end{proof}

\begin{lemma} \label{lem:lem-1} For $\mathbf{y} = \argmin_{\mathbf{x}} \tilde{f}(\mathbf{x})$,
\begin{itemize} 
    \item [(i)] $y_1 \geq y_2 \geq \cdots \geq y_{\tilde{T}} \geq 0$.
    \item [(ii)]
    $
        y_{t+1} = y_t - \left( \gamma - \tilde{\sigma} \nrm{\mathbf{y}}^{q-2}\sum_{j=1}^t y_j \right)^{\frac{1}{p+\nu-1}}.
    $
    \item [(iii)] $\sum_{i=1}^{\tilde{T}} y_i = \frac{\gamma}{\tilde{\sigma} \nrm{\mathbf{y}}^{q-2}}$.
\end{itemize}
\end{lemma}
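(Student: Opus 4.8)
The plan is to derive everything from the first-order optimality condition $\nabla \tilde{f}(\mathbf{y}) = \mathbf{0}$. Since $\tilde{f}$ is continuous, strictly convex, and coercive (the term $\frac{\tilde{\sigma}}{q}\nrm{\mathbf{x}}^q$ with $q \geq 2$ gives both strict convexity and coercivity), the minimizer $\mathbf{y}$ exists, is unique, and satisfies $\nrm{\mathbf{y}} > 0$ (the linear term $-\gamma x_1$ rules out $\mathbf{0}$ being stationary). Write $\psi(t) \defeq \abs{t}^{p+\nu-1}\sgn(t)$ for the derivative of $t \mapsto \frac{1}{p+\nu}\abs{t}^{p+\nu}$; this is genuinely $C^1$ here, since $q < p+\nu$ together with $q \geq 2$ forces $p+\nu > 2$, so no subgradient subtleties arise. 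Setting $c \defeq \tilde{\sigma}\nrm{\mathbf{y}}^{q-2} > 0$, the coordinatewise stationarity equations are $\psi(y_1 - y_2) + c\,y_1 = \gamma$ at the first coordinate, $\psi(y_k - y_{k+1}) - \psi(y_{k-1} - y_k) + c\,y_k = 0$ at interior coordinates, and a boundary equation $\psi(y_{\tilde{T}} - y_{\tilde{T}+1}) = c\,y_{\tilde{T}+1}$ at the last coordinate entering the differences (with $y_j = 0$ beyond it).

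For part (i), I would first prove nonnegativity by a perturbation argument: if $y_m = \min_i y_i < 0$, then $y_m - y_{m+1} \leq 0$ gives $\psi(y_m - y_{m+1}) \leq 0$, $y_{m-1} - y_m \geq 0$ gives $-\psi(y_{m-1}-y_m) \leq 0$, and $c\,y_m < 0$ strictly, so the stationarity equation at $m$ would force $0 = \partial_m \tilde{f}(\mathbf{y}) < 0$, a contradiction; the first and last coordinate cases go the same way, with the $-\gamma$ term only helping. Given $y_i \geq 0$, monotonicity follows by backward induction: the boundary equation gives $\psi(y_{\tilde{T}} - y_{\tilde{T}+1}) = c\,y_{\tilde{T}+1} \geq 0$, hence $y_{\tilde{T}} \geq y_{\tilde{T}+1}$, and if $y_{k+1} \geq y_{k+2}$ then the equation at $k+1$ gives $\psi(y_k - y_{k+1}) = \psi(y_{k+1}-y_{k+2}) + c\,y_{k+1} \geq 0$, hence $y_k \geq y_{k+1}$, which yields $y_1 \geq y_2 \geq \cdots \geq y_{\tilde{T}} \geq 0$.

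For part (ii), I would telescope: summing the stationarity equations for coordinates $1$ through $t$ collapses the $\psi(y_{k-1}-y_k)$ terms and leaves $\psi(y_t - y_{t+1}) = \gamma - c\sum_{j=1}^t y_j$. By part (i) the right-hand side is nonnegative and $y_t \geq y_{t+1}$, so applying $\psi^{-1}(s) = \abs{s}^{1/(p+\nu-1)}\sgn(s)$ — which on nonnegative arguments is just the $\tfrac{1}{p+\nu-1}$-th power — gives $y_{t+1} = y_t - \bigl(\gamma - \tilde{\sigma}\nrm{\mathbf{y}}^{q-2}\sum_{j=1}^t y_j\bigr)^{1/(p+\nu-1)}$, which is the claimed recursion. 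Part (iii) then comes from pushing the telescoped identity to the last coordinate and substituting the boundary equation $\psi(y_{\tilde{T}}-y_{\tilde{T}+1}) = c\,y_{\tilde{T}+1}$: the $\psi$-term on the left is absorbed into the sum, leaving $c\sum_j y_j = \gamma$, i.e.\ $\sum_i y_i = \gamma/(\tilde{\sigma}\nrm{\mathbf{y}}^{q-2})$.

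The main obstacle I anticipate is the bookkeeping at the two ends of the chain: correctly handling the $-\gamma$ in the first stationarity equation and, more delicately, identifying the right boundary condition at the last active coordinate so that the telescoping in (ii)–(iii) closes cleanly (and so that the index range in the displayed sum matches the construction). The interior algebra is routine once $\psi$ and $\psi^{-1}$ are in place. A secondary point is that all three claims rely on $\nrm{\mathbf{y}} > 0$ so that $c > 0$, which must be dispatched at the outset.
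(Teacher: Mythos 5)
Your proposal is correct and follows essentially the same route as the paper: write the coordinatewise stationarity conditions for $\nabla \tilde f(\mathbf{y}) = \mathbf{0}$, telescope the interior equations to get (ii), and push the telescoped identity through the boundary equation to get (iii). The only substantive addition is that you supply a full perturbation/backward-induction proof of part (i), which the paper delegates to a citation of \citet{arjevani2019oracle}. The indexing pitfall you anticipate at the end of the chain is real, but it is inherited from the paper itself: the main text writes $\sum_{i=1}^{\tilde T}\abs{x_i - x_{i+1}}^{p+\nu}$ (which literally references a coordinate $x_{\tilde T + 1}$), whereas the appendix gradient treats the variable as $\tilde T$-dimensional and makes the last component $\abs{y_{\tilde T} - y_{\tilde T - 1}}^{p+\nu-2}(y_{\tilde T}-y_{\tilde T - 1}) + \tilde\sigma\nrm{\mathbf{y}}^{q-2} y_{\tilde T}$, i.e.\ effectively $\sum_{i=1}^{\tilde T - 1}$. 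Under your reading, with boundary stationarity $\psi(y_{\tilde T} - y_{\tilde T + 1}) = c\,y_{\tilde T + 1}$, the chain closes to $\sum_{j=1}^{\tilde T + 1} y_j = \gamma/c$, one index past the lemma's stated sum; to land exactly on part (iii) as written you should adopt the appendix's convention and make the boundary equation $\psi(y_{\tilde T - 1} - y_{\tilde T}) = c\,y_{\tilde T}$ (the paper's eq.\ at coordinate $\tilde T$).
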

\begin{proof}
    (i) The proof is similar to \citep[Lemma 1]{arjevani2019oracle}, relying on the fact that $\tilde{f}$ is strictly convex, which holds true for our higher-order construction as well, since the function is uniformly convex.

    (ii)
    \begin{align*}
        \nabla \tilde{f}(y) = \left[
        \begin{matrix}
            \left| y_1 - y_2 \right|^{p+\nu-2}(y_1 - y_2) - \gamma + \tilde{\sigma} \nrm{\mathbf{y}}^{q-2} y_1 \\
            \left| y_2 - y_1 \right|^{p+\nu-2}(y_2 - y_1) + \left| y_2 - y_3 \right|^{p+\nu-2}(y_2 - y_3) + \tilde{\sigma} \nrm{\mathbf{y}}^{q-2} y_2 \\
            \vdots \\
            \left| y_{\tilde{T}-1} - y_{\tilde{T}-2} \right|^{p+\nu-2}(y_{\tilde{T}-1} - y_{\tilde{T}-2}) + \left| y_{\tilde{T}-1} - y_{\tilde{T}} \right|^{p+\nu-2}(y_{\tilde{T}-1} - y_{\tilde{T}}) + \tilde{\sigma} \nrm{\mathbf{y}}^{q-2} y_{\tilde{T}-1} \\
            \left| y_{\tilde{T}} - y_{\tilde{T}-1} \right|^{p+\nu-2}(y_{\tilde{T}} - y_{\tilde{T}-1}) + \tilde{\sigma} \nrm{\mathbf{y}}^{q-2} y_{\tilde{T}}
        \end{matrix}
        \right]
    \end{align*}
     Given that $y_1 \geq y_2 \geq \cdots \geq y_{\tilde{T}} \geq 0$, we have $\forall i \in [\tilde{T}-1], \ \left| y_i - y_{i+1} \right| = y_i - y_{i+1}$. Therefore, with $\nabla \tilde{f}(y) = 0$, we have
     \begin{align}
         (y_1 - y_2)^{p+\nu-1} &= \gamma - \tilde{\sigma} \nrm{\mathbf{y}}^{q-2} y_1,  \label{eq:3-1}\\
         (y_{i-1} - y_i)^{p+\nu-1} &= (y_i - y_{i+1})^{p+\nu-1} + \tilde{\sigma} \nrm{\mathbf{y}}^{q-2} y_i, \quad 2 \leq i \leq \tilde{T}-1, \label{eq:3-2}\\
         (y_{\tilde{T}-1} - y_{\tilde{T}})^{p+\nu-1} &= \tilde{\sigma} \nrm{\mathbf{y}}^{q-2} y_{\tilde{T}}. \label{eq:3-3}
     \end{align}
     Summing Eq. \eqref{eq:3-1} and \eqref{eq:3-2}, we have
     \begin{align*}
         (y_i - y_{i+1})^{p+\nu-1} = \gamma - \tilde{\sigma} \nrm{\mathbf{y}}^{q-2}\sum_{j=1}^iy_j,
     \end{align*}
     which completes the proof.

     (iii) We know from part (ii) that
     \begin{align*}
         (y_{\tilde{T}-1} - y_{\tilde{T}})^{p+\nu-1} = \gamma - \tilde{\sigma} \nrm{\mathbf{y}}^{q-2}\sum_{j=1}^{\tilde{T}-1}y_j.
     \end{align*}
     Grouping this with Eq. \eqref{eq:3-3}, we have $\tilde{\sigma} \nrm{\mathbf{y}}^{q-2} y_{\tilde{T}} = \gamma - \tilde{\sigma} \nrm{\mathbf{y}}^{q-2}\sum_{j=1}^{\tilde{T}-1}y_j$, which yields the desired result.
\end{proof}

\lemnormbound*
\begin{proof} By Lemma \ref{lem:lem-1} (iii) and Lemma \ref{lem:lem-2} (ii),
\begin{align*}
    \nrm{\mathbf{y}}^2_2 &\leq \nrm{\mathbf{y}}_1 \nrm{\mathbf{y}}_\infty \\
    &= \max_{i \in [\tilde{T}]} \left| y_i \right| \times \sum_{i=1}^{d} y_i \\
    &= y_1 \times \sum_{i=1}^{\tilde{T}} y_i \\
    &\leq \left(\gamma^\frac{1}{p+\nu-1} + \sqrt{\frac{2\gamma^\frac{p+\nu}{p+\nu-1}}{\tilde{\sigma}\nrm{\mathbf{y}}^{q-2}}}\right) \times \frac{\gamma}{\tilde{\sigma} \nrm{\mathbf{y}}^{q-2}} \\
    &=  \left(1 + \sqrt{\frac{2\gamma^\frac{p+\nu-2}{p+\nu-1}}{\tilde{\sigma}\nrm{\mathbf{y}}^{q-2}}}\right) \frac{\gamma^\frac{p+\nu}{p+\nu-1}}{\tilde{\sigma} \nrm{\mathbf{y}}^{q-2}}
\end{align*}
Let $\gamma \geq \left(3 \tilde{\sigma} \nrm{\mathbf{y}}^{q-2} \right)^\frac{p+\nu-1}{p+\nu-2}$, then we have $\frac{\gamma^\frac{p+\nu-2}{p+\nu-1}}{3\tilde{\sigma} \nrm{\mathbf{y}}^{q-2}} \geq 1$ and moreover $\sqrt{\frac{\gamma^\frac{p+\nu-2}{p+\nu-1}}{3\tilde{\sigma} \nrm{\mathbf{y}}^{q-2}}} \geq 1$, so that we can merge the terms as follows:
\begin{align*}
    \nrm{\mathbf{y}}^2 &\leq \left(\sqrt{\frac{\gamma^\frac{p+\nu-2}{p+\nu-1}}{3\tilde{\sigma} \nrm{\mathbf{y}}^{q-2}}} + \sqrt{\frac{2\gamma^\frac{p+\nu-2}{p+\nu-1}}{\tilde{\sigma}\nrm{\mathbf{y}}^{q-2}}}\right) \frac{\gamma^\frac{p+\nu}{p+\nu-1}}{\tilde{\sigma} \nrm{\mathbf{y}}^{q-2}} \\
    &= \left( \sqrt{\frac{1}{3} + \sqrt{2}} \right) \frac{\gamma^{\frac{p+\nu-2}{2({p+\nu-1})}+\frac{p+\nu}{p+\nu-1}}}{\tilde{\sigma}^\frac{3}{2}\nrm{\mathbf{y}}^\frac{3(q-2)}{2}} \\
    &\leq \frac{2 \gamma^\frac{3(p+\nu)-2}{2(p+\nu-1)}}{\tilde{\sigma}^\frac{3}{2}\nrm{\mathbf{y}}^\frac{3(q-2)}{2}}.
\end{align*}
We can solve for $\nrm{\mathbf{y}} \leq \left( \frac{2 \gamma^\frac{3(p+\nu)-2}{2(p+\nu-1)}}{\tilde{\sigma}^\frac{3}{2}}\right)^\frac{2}{3q-2} = \frac{2^\frac{2}{3q-2} \gamma^\frac{3(p+\nu)-2}{(p+\nu-1)(3q-2)}}{\tilde{\sigma}^\frac{3}{3q-2}}$.
\end{proof}

\lemlemtwo*
\begin{proof}
    (i) By Lemma \ref{lem:lem-1} (ii), $\forall \ i \in [\tilde{T}]$
    \begin{align*}
        y_i &= y_{i-1} - \left( \gamma - \tilde{\sigma} \nrm{\mathbf{y}}^{q-2}\sum_{j=1}^{i-1} y_j \right)^{\frac{1}{p+\nu-1}} \\
        &\geq y_{i-1} - \gamma^\frac{1}{p+\nu-1} \\
        &\geq y_1 - (i-1) \gamma^\frac{1}{p+\nu-1},
    \end{align*}
    in which the first inequality follows from Lemma \ref{lem:lem-1} (i) that $\forall i \in [\tilde{T}]$, $y_i \geq 0$, and the second inequality follows from applying the first inequality recursively.

    (ii) It follows from part (i) that
    \begin{align*}
        \sum_{i=1}^{\tilde{T}} y_i \geq \sum_{i=1}^{\tilde{T}} \max \left \{ 0, y_1 - (i-1)\gamma^\frac{1}{p+\nu-1} \right \}.
    \end{align*}
    For $\tilde{T} = \left\lfloor \frac{y_1}{\gamma^\frac{1}{p+\nu-1}} +1 \right\rfloor \leq \frac{y_1}{\gamma^\frac{1}{p+\nu-1}}+1$, we always have $y_1 - (\tilde{T}-1)\gamma^\frac{1}{p+\nu-1} \geq 0$. Consequently, $\forall i \in [\tilde{T}]$, $y_1 - (i-1)\gamma^\frac{1}{p+\nu-1} \geq 0$. Therefore,
    \begin{align*}
        \sum_{i=1}^{\tilde{T}} y_i &\geq \sum_{i=1}^{\tilde{T}} y_1 - (i-1)\gamma^\frac{1}{p+\nu-1} \\
        &= \sum_{i=1}^{\left\lfloor y_1/\gamma^\frac{1}{p+\nu-1} +1 \right\rfloor} y_1 - (i-1)\gamma^\frac{1}{p+\nu-1} \\
        &=  \left\lfloor \frac{y_1}{\gamma^\frac{1}{p+\nu-1}} +1 \right\rfloor \cdot y_1 - \gamma^\frac{1}{p+\nu-1} \cdot \frac{\left\lfloor \frac{y_1}{\gamma^\frac{1}{p+\nu-1}} +1 \right\rfloor \left( \left\lfloor \frac{y_1}{\gamma^\frac{1}{p+\nu-1}} +1 \right\rfloor - 1 \right)}{2} \\
        &\geq \frac{y_1}{\gamma^\frac{1}{p+\nu-1}}  \cdot y_1 - \gamma^\frac{1}{p+\nu-1} \cdot \frac{\left( \frac{y_1}{\gamma^\frac{1}{p+\nu-1}} +1 \right) \left( \frac{y_1}{\gamma^\frac{1}{p+\nu-1}} +1 - 1 \right)}{2} \\
        &= \frac{y_1^2}{\gamma^\frac{1}{p+\nu-1}} - \frac{y_1^2}{2 \gamma^\frac{1}{p+\nu-1}} - \frac{y_1}{2} \\
        &= \frac{y_1}{2}\left(\frac{y_1}{\gamma^\frac{1}{p+\nu-1}} - 1\right).
    \end{align*}

     Combining with Lemma \ref{lem:lem-1} (iii) that $\sum_{i=1}^{\tilde{T}} y_i = \frac{\gamma}{\tilde{\sigma} \nrm{\mathbf{y}}^{q-2}}$, we have
    \begin{align*}
        \frac{\gamma}{\tilde{\sigma} \nrm{\mathbf{y}}^{q-2}} = \sum_{i=1}^{\tilde{T}} y_i \geq \frac{y_1}{2}\left(\frac{y_1}{\gamma^\frac{1}{p+\nu-1}} - 1\right).
    \end{align*}
    Equivalently,
    \begin{align*}
        y_1^2 - \gamma^\frac{1}{p+\nu-1} y_1 - \frac{2\gamma^\frac{p+\nu}{p+\nu-1}}{\tilde{\sigma}\nrm{\mathbf{y}}^{q-2}} \leq 0.
    \end{align*}
    By the quadratic formula, we have
    \begin{align*}
        y_1 &\leq \frac{\gamma^\frac{1}{p+\nu-1} + \sqrt{\gamma^\frac{2}{p+\nu-1}+ \frac{8\gamma^\frac{p+\nu}{p+\nu-1}}{\tilde{\sigma}\nrm{\mathbf{y}}^{q-2}}}}{2} \\
        &\leq \frac{\gamma^\frac{1}{p+\nu-1} + \sqrt{\gamma^\frac{2}{p+\nu-1}} + \sqrt{\frac{8\gamma^\frac{p+\nu}{p+\nu-1}}{\tilde{\sigma}\nrm{\mathbf{y}}^{q-2}}}}{2} \\
        &= \gamma^\frac{1}{p+\nu-1} +  \sqrt{\frac{2\gamma^\frac{p+\nu}{p+\nu-1}}{\tilde{\sigma}\nrm{\mathbf{y}}^{q-2}}}
    \end{align*}

    (iii) Since $\sum_{i=1}^{\tilde{T}} y_i = \frac{\gamma}{\tilde{\sigma} \nrm{\mathbf{y}}^{q-2}}$, $\exists \ t_0 \in [\tilde{T}]$ such that $\sum_{i=1}^{t_0} y_i > (1-\frac{1}{2^{p+\nu-1}}) \frac{\gamma}{\tilde{\sigma}\nrm{\mathbf{y}}^{q-2}}$ and $\forall t < t_0$, $\sum_{i=1}^{t} y_i \leq (1-\frac{1}{2^{p+\nu-1}}) \frac{\gamma}{\tilde{\sigma}\nrm{\mathbf{y}}^{q-2}}$. Then $\forall \ i < t_0$, we can merge the terms in Lemma \ref{lem:lem-1} (ii) as follows:
    \begin{align*}
        y_{i+1} &= y_i - \left( \gamma - \tilde{\sigma} \nrm{\mathbf{y}}^{q-2}\sum_{j=1}^i y_j \right)^{\frac{1}{p+\nu-1}} \\
        &\leq y_i - \left( \gamma - (1-\frac{1}{2^{p+\nu-1}}) \gamma \right)^{\frac{1}{p+\nu-1}} \\
        &= y_i - \frac{\gamma^\frac{1}{p+\nu-1}}{2}.
    \end{align*}
    Applying this relation recursively, we have
    \begin{align*}
        y_{t_0} \leq y_{t_0-1} - \frac{\gamma^\frac{1}{p+\nu-1}}{2} \leq \cdots \leq y_1 - (t_0 - 1) \frac{\gamma^\frac{1}{p+\nu-1}}{2}. 
    \end{align*}
    Given that $y_{t_0} \geq 0$, this yields $ y_1 \geq (t_0 - 1) \frac{\gamma^\frac{1}{p+\nu-1}}{2}$.

    Now we characterize $t_0$. By definition, we have $\sum_{i=1}^{t_0} y_i > (1-\frac{1}{2^{p+\nu-1}}) \frac{\gamma}{\tilde{\sigma}\nrm{\mathbf{y}}^{q-2}}$. In the meantime,
    \begin{align*}
        \sum_{i=1}^{t_0} y_i &\leq \sum_{i=1}^{t_0} y_1 \\
        &= t_0 y_1 \\
        &\leq t_0 \left( \gamma^\frac{1}{p+\nu-1} + \sqrt{\frac{2\gamma^\frac{p+\nu}{p+\nu-1}}{\tilde{\sigma}\nrm{\mathbf{y}}^{q-2}}} \right),
    \end{align*}
    where the first inequality follows from Lemma \ref{lem:lem-1} (i) and the second from part (ii). Together, we have $(1-\frac{1}{2^{p+\nu-1}}) \frac{\gamma}{\tilde{\sigma}\nrm{\mathbf{y}}^{q-2}} < t_0 \left( \gamma^\frac{1}{p+\nu-1} + \sqrt{\frac{2\gamma^\frac{p+\nu}{p+\nu-1}}{\tilde{\sigma}\nrm{\mathbf{y}}^{q-2}}} \right)$, from which we solve for
    \begin{align*}
        t_0 > \frac{(2^{p+\nu-1} - 1)\gamma^\frac{p+\nu-2}{p+\nu-1}}{2^{p+\nu-1}\left( \tilde{\sigma}\nrm{\mathbf{y}}^{q-2} + \sqrt{2\tilde{\sigma}\gamma^\frac{p+\nu-2}{p+\nu-1}\nrm{\mathbf{y}}^{q-2}}\right)}
    \end{align*}
    Plugging this characterization of $t_0$ back in,
    \begin{align*}
        y_1 &\geq (t_0 - 1) \frac{\gamma^\frac{1}{p+\nu-1}}{2} \\
        &> \left( \frac{(2^{p+\nu-1} - 1)\gamma^\frac{p+\nu-2}{p+\nu-1}}{2^{p+\nu-1}\left( \tilde{\sigma}\nrm{\mathbf{y}}^{q-2} + \sqrt{2\tilde{\sigma}\gamma^\frac{p+\nu-2}{p+\nu-1}\nrm{\mathbf{y}}^{q-2}}\right)} - 1\right) \frac{\gamma^\frac{1}{p+\nu-1}}{2} \\
        &= \frac{(2^{p+\nu-1} - 1)\gamma}{2^{p+\nu+1}\left( \tilde{\sigma}\nrm{\mathbf{y}}^{q-2} + \sqrt{2\tilde{\sigma}\gamma^\frac{p+\nu-2}{p+\nu-1}\nrm{\mathbf{y}}^{q-2}}\right)} - \frac{\gamma^\frac{1}{p+\nu-1}}{2}.
    \end{align*}
    Finally, plugging this into the result from part (i), $\forall i \in [\tilde{T}]$,
    \begin{align*}
        y_i &\geq y_1 - (i-1) \gamma^\frac{1}{p+\nu-1} \\
        &\geq \frac{(2^{p+\nu-1} - 1)\gamma}{2^{p+\nu+1}\left( \tilde{\sigma}\nrm{\mathbf{y}}^{q-2} + \sqrt{2\tilde{\sigma}\gamma^\frac{p+\nu-2}{p+\nu-1}\nrm{\mathbf{y}}^{q-2}}\right)} - \frac{\gamma^\frac{1}{p+\nu-1}}{2} - (i-1) \gamma^\frac{1}{p+\nu-1} \\
        &= \frac{(2^{p+\nu-1} - 1)\gamma}{2^{p+\nu+1}\left( \tilde{\sigma}\nrm{\mathbf{y}}^{q-2} + \sqrt{2\tilde{\sigma}\gamma^\frac{p+\nu-2}{p+\nu-1}\nrm{\mathbf{y}}^{q-2}}\right)} + \left(\frac{1}{2} - i \right) \gamma^\frac{1}{p+\nu-1}
    \end{align*}
    By letting $\gamma \geq \tilde{\sigma}^\frac{p+\nu-1}{p+\nu-2} \nrm{\mathbf{y}}^\frac{(p+\nu-1)(q-2)}{p+\nu-2}$ as stated in the condition, we have $\gamma^\frac{p+\nu-2}{p+\nu-1} \geq \tilde{\sigma}\nrm{\mathbf{y}}^{q-2}$ and are able to merge the terms as follows:
    \begin{align*}
        y_i &\geq \frac{(2^{p+\nu-1} - 1)\gamma}{2^{p+\nu+1}\left( \left(\tilde{\sigma}\nrm{\mathbf{y}}^{q-2}\right)^\frac{1}{2} \cdot \left(\tilde{\sigma}\nrm{\mathbf{y}}^{q-2}\right)^\frac{1}{2} + \sqrt{2} \left(\tilde{\sigma}\nrm{\mathbf{y}}^{q-2}\right)^\frac{1}{2}\cdot\left(\gamma^\frac{p+\nu-2}{p+\nu-1}\right)^\frac{1}{2}\right)} + \left(\frac{1}{2} - i \right) \gamma^\frac{1}{p+\nu-1} \\
        &\geq \frac{(2^{p+\nu-1} - 1)\gamma}{2^{p+\nu+1}\left( \left(\tilde{\sigma}\nrm{\mathbf{y}}^{q-2}\right)^\frac{1}{2} \cdot \left(\gamma^\frac{p+\nu-2}{p+\nu-1}\right)^\frac{1}{2} + \sqrt{2} \left(\tilde{\sigma}\nrm{\mathbf{y}}^{q-2}\right)^\frac{1}{2}\cdot\left(\gamma^\frac{p+\nu-2}{p+\nu-1}\right)^\frac{1}{2}\right)} + \left(\frac{1}{2} - i \right) \gamma^\frac{1}{p+\nu-1} \\
        &\geq \frac{(2^{p+\nu-1} - 1)\gamma}{2^{p+\nu+1}\left( (1+\sqrt{2})\left(\tilde{\sigma}\nrm{\mathbf{y}}^{q-2}\right)^\frac{1}{2} \cdot \left(\gamma^\frac{p+\nu-2}{p+\nu-1}\right)^\frac{1}{2}\right)} + \left(\frac{1}{2} - i \right) \gamma^\frac{1}{p+\nu-1} \\
        &\geq \frac{\gamma^\frac{p+\nu}{2(p+\nu-1)}}{2^{p+\nu+1}\tilde{\sigma}^\frac{1}{2}\nrm{\mathbf{y}}^\frac{q-2}{2}} + \left(\frac{1}{2} - i \right) \gamma^\frac{1}{p+\nu-1}
    \end{align*}
\end{proof}

\lemlemthree*
\begin{proof}
    (i) Starting from Lemma \ref{lem:lem-1} (ii), $\forall i \in [\tilde{T}]$,
    \begin{align*}
        y_i &= y_{i+1} + \left( \gamma - \tilde{\sigma} \nrm{\mathbf{y}}^{q-2}\sum_{j=1}^i y_j \right)^{\frac{1}{p+\nu-1}} \\
        &=  y_{i+1} + \left( \gamma - \tilde{\sigma} \nrm{\mathbf{y}}^{q-2}\sum_{j=1}^{\tilde{T}} y_j + \tilde{\sigma} \nrm{\mathbf{y}}^{q-2}\sum_{j={i+1}}^{\tilde{T}} y_j\right)^{\frac{1}{p+\nu-1}} \\
        &= y_{i+1} + \left( \gamma - \tilde{\sigma} \nrm{\mathbf{y}}^{q-2} \cdot \frac{\gamma}{\tilde{\sigma} \nrm{\mathbf{y}}^{q-2}} + \tilde{\sigma} \nrm{\mathbf{y}}^{q-2}\sum_{j={i+1}}^{\tilde{T}} y_j\right)^{\frac{1}{p+\nu-1}} &  \text{(Lemma \ref{lem:lem-1} (iii))}\\
        &= y_{i+1} + \left(\tilde{\sigma} \nrm{\mathbf{y}}^{q-2}\sum_{j={i+1}}^{\tilde{T}} y_j\right)^{\frac{1}{p+\nu-1}}.
    \end{align*}
    Since $x_{i+1} \geq 0$, we have
    \begin{align*}
        y_i &\geq \left(\tilde{\sigma} \nrm{\mathbf{y}}^{q-2}\sum_{j={i+1}}^{\tilde{T}} y_j\right)^{\frac{1}{p+\nu-1}} \\
        &\geq \left(\tilde{\sigma} \nrm{\mathbf{y}}^{q-2} y_{i+1}\right)^{\frac{1}{p+\nu-1}},
    \end{align*}
    equivalently,
    \begin{align*}
        y_{i+1} \leq \frac{1}{\tilde{\sigma} \nrm{\mathbf{y}}^{q-2}} y_i^{p+\nu-1}.
    \end{align*}
    
    (ii)
    \begin{align*}
        \sum_{j=i+1}^{\tilde{T}} y_j &= y_{i+1} + y_{i+2} + \cdots + y_{\tilde{T}} \\
        &\leq y_{i+1} + \frac{1}{\tilde{\sigma} \nrm{\mathbf{y}}^{q-2}} y_{i+1}^{p+\nu-1} + \frac{1}{\left(\tilde{\sigma} \nrm{\mathbf{y}}^{q-2}\right)^{(p+\nu-1)+1}} y_{i+1}^{(p+\nu-1)^2} + \cdots \\
        &\quad + \frac{1}{\left( \tilde{\sigma} \nrm{\mathbf{y}}^{q-2} \right)^{\sum_{j=0}^{\tilde{T}-i-2} (p+\nu-1)^j}} y_{j+1}^{(p+\nu-1)^{\tilde{T}-i-1}} \\
        &= y_{i+1} \sum_{j=0}^{\tilde{T}-i-1} \left( \frac{y_{i+1}}{\tilde{\sigma}^\frac{1}{p+\nu-2} \nrm{\mathbf{y}}^\frac{q-2}{p+\nu-2}}\right)^{(p+\nu-1)^j-1}.
    \end{align*}
    
    Given that $i \geq t_1$, then $i+1 \geq t_1+1$ and by Lemma \ref{lem:lem-1} (i) and part (ii) of this lemma, $y_{i+1} \leq y_{t_1+1} \leq \frac{1}{p+\nu-1}\tilde{\sigma}^\frac{1}{p+\nu-2}\nrm{\mathbf{y}}^\frac{q-2}{p+\nu-2}$. Therefore,
    \begin{align*}
        \sum_{j=i+1}^{\tilde{T}} y_j &\leq y_{i+1} \sum_{j=0}^{\tilde{T}-i-1} \left( \frac{\frac{1}{p+\nu-1}\tilde{\sigma}^\frac{1}{p+\nu-2}\nrm{\mathbf{y}}^\frac{q-2}{p+\nu-2}}{\tilde{\sigma}^\frac{1}{p+\nu-2} \nrm{\mathbf{y}}^\frac{q-2}{p+\nu-2}}\right)^{(p+\nu-1)^j-1} \\
        &= (p+\nu-1)y_{i+1}\sum_{j=0}^{\tilde{T}-i-1}\frac{1}{(p+\nu-1)^{(p+\nu-1)^j}} \\
        &\leq \frac{p+\nu-1}{(p+\nu-2)^2}y_{i+1} 
    \end{align*}
    With this, we go back to part (i), for $y_{i+1} \leq \frac{1}{p+\nu-1}\tilde{\sigma}^\frac{1}{p+\nu-2}\nrm{\mathbf{y}}^\frac{q-2}{p+\nu-2}$,
    \begin{align*}
        y_i &= y_{i+1} + \left(\tilde{\sigma} \nrm{\mathbf{y}}^{q-2}\sum_{j={i+1}}^{\tilde{T}} y_j\right)^{\frac{1}{p+\nu-1}} \\
        &\leq y_{i+1} + \left(\frac{p+\nu-1}{(p+\nu-2)^2} \tilde{\sigma} \nrm{\mathbf{y}}^{q-2} y_{i+1}\right)^{\frac{1}{p+\nu-1}} \\
        &= y_{i+1}^\frac{1}{p+\nu-1} y_{i+1}^\frac{p+\nu-2}{p+\nu-1} + \left(\frac{p+\nu-1}{(p+\nu-2)^2} \tilde{\sigma} \nrm{\mathbf{y}}^{q-2} y_{i+1}\right)^{\frac{1}{p+\nu-1}} \\
        &\leq y_{i+1}^\frac{1}{p+\nu-1} \left( \frac{1}{p+\nu-1}\tilde{\sigma}^\frac{1}{p+\nu-2}\nrm{\mathbf{y}}^\frac{q-2}{p+\nu-2} \right)^\frac{p+\nu-2}{p+\nu-1} + \left(\frac{p+\nu-1}{(p+\nu-2)^2} \tilde{\sigma} \nrm{\mathbf{y}}^{q-2} y_{i+1}\right)^{\frac{1}{p+\nu-1}} \\
        &= \left(((p+\nu-1)^\frac{2-p-\nu}{p+\nu-1} + \left(\frac{p+\nu-1}{(p+\nu-2)^2}\right)^\frac{1}{p+\nu-1}\right)\left( \tilde{\sigma} \nrm{\mathbf{y}}^{q-2} y_{i+1} \right)^\frac{1}{p+\nu-1} \\
        &= c_{p,\nu}\left( \tilde{\sigma} \nrm{\mathbf{y}}^{q-2} y_{i+1} \right)^\frac{1}{p+\nu-1}
    \end{align*} 
    for $c_{p,\nu} = ((p+\nu-1)^\frac{2-p-\nu}{p+\nu-1} + \left(\frac{p+\nu-1}{(p+\nu-2)^2}\right)^\frac{1}{p+\nu-1}$.
    Therefore, $y_{i+1} \geq \left(\frac{1}{c_{p,\nu}}\right)^{p+\nu-1} \frac{1}{\tilde{\sigma}\nrm{\mathbf{y}}^{q-2}} y_i^{p+\nu-1}$.

    (iii) $\forall i \leq \tilde{T} - t_1$, $t_1 + i \geq t_1$, therefore, applying part (ii) recursively yields
    \begin{align*}
        y_{t_1+i} &\geq \left(\frac{1}{c_{p,\nu}}\right)^{p+\nu-1} \frac{\left(y_{t_1+i-1}\right)^{p+\nu-1}}{\tilde{\sigma}\nrm{\mathbf{y}}^{q-2}}  \\
        &\geq \left(\frac{1}{c_{p,\nu}}\right)^{({p+\nu-1})^2+({p+\nu-1})} \frac{\left(y_{t_1+i-2}\right)^{({p+\nu-1})^2}}{\left(\tilde{\sigma}\nrm{\mathbf{y}}^{q-2}\right)^{p+\nu}}  \\
        &\geq \cdots \\
        &\geq \left(\frac{1}{c_{p,\nu}}\right)^{\frac{({p+\nu-1})(({p+\nu-1})^i-1)}{p+\nu-2}} \frac{y_{t_1}^{({p+\nu-1})^i}}{\left(\tilde{\sigma}\nrm{\mathbf{y}}^{q-2}\right)^\frac{({p+\nu-1})^i-1}{p+\nu-2}}
    \end{align*}
    By the definition of $t_1$, we know that $y_{t_1} > \frac{1}{p+\nu-1}\tilde{\sigma}^\frac{1}{p+\nu-2}\nrm{\mathbf{y}}^\frac{q-2}{p+\nu-2}$. Thus
    \begin{align*}
        y_{t_1+i} &\geq \left(\frac{1}{c_{p,\nu}}\right)^{\frac{({p+\nu-1})(({p+\nu-1})^i-1)}{({p+\nu-1})-1}} \frac{\left(\frac{1}{p+\nu-1}\tilde{\sigma}^\frac{1}{p+\nu-2}\nrm{\mathbf{y}}^\frac{q-2}{p+\nu-2}\right)^{({p+\nu-1})^i}}{\left(\tilde{\sigma}\nrm{\mathbf{y}}^{q-2}\right)^\frac{({p+\nu-1})^i-1}{p+\nu-2}} \\
        &= \left(\frac{1}{c_{p,\nu}}\right)^{\frac{({p+\nu-1})(({p+\nu-1})^i-1)}{p+\nu-2}} \left(\tilde{\sigma}\nrm{\mathbf{y}}^{q-2}\right)^\frac{1}{p+\nu-2} ({p+\nu-1})^{-({p+\nu-1})^i}
    \end{align*}
\end{proof}

\end{document}